\theoremstyle{plain}
\newtheorem{theorem}{Theorem}[section]
\newtheorem{lemma}[theorem]{Lemma}
\newtheorem{corollary}[theorem]{Corollary}
\newtheorem{proposition}[theorem]{Proposition}
\theoremstyle{definition}
\newtheorem{definition}[theorem]{Definition}
\newtheorem{remark}[theorem]{Remark}
\newtheorem{question}[theorem]{Question}
\title[Simplicial complexes with $\pi_1 \cong \mathbb{Z}^n$]{Vertex numbers of simplicial complexes \\ with free abelian fundamental group}
\author{Florian Frick}
\address[FF]{Dept.\ Math.\ Sciences, Carnegie Mellon University, Pittsburgh, PA 15213, USA \newline \indent Inst. Math., Freie Universit\"at Berlin, Arnimallee 2, 14195 Berlin, Germany}
\email{frick@cmu.edu} 
\author{Matt Superdock}
\address[MS]{Dept.\ Mathematics and Computer Science, Rhodes College, Memphis, TN 38112, USA}
\email{superdockm@rhodes.edu}
\date{September 24, 2021}
\thanks{FF was supported by NSF grant DMS 1855591, NSF CAREER grant DMS 2042428, and a Sloan Research Fellowship. MS was supported by NSF grant DMS 1855591}
\begin{document}

\maketitle

\begin{abstract}
  \small
  We show that the minimum number of vertices of a simplicial complex with
  fundamental group $\mathbb{Z}^{n}$ is at most $O(n)$ and at least
  $\Omega(n^{3/4})$. For the upper bound, we use a result on orthogonal
  1-factorizations of $K_{2n}$. For the lower bound, we use a fractional
  Sylvester--Gallai result. We also prove that any group presentation $\langle S | R\rangle \cong
  \mathbb{Z}^{n}$ whose relations are of the form $g^{a}h^{b}i^{c}$ for $g, h, i
  \in S$ has at least $\Omega(n^{3/2})$ generators.
\end{abstract}

\section{Introduction}

Given a space~$X$, a \emph{vertex-minimal triangulation} of $X$ is a simplicial
complex homeomorphic to $X$ using as few vertices as possible. Such
triangulations are known for only a few manifolds~\cite{brehm1987combinatorial,
kuhnel1986higherdimensional, lutz2005triangulated}, and upper and lower bounds
differ significantly for many others, despite recent improvements such
as~\cite{adiprasito2020subexponential}. For example, the $n$-dimensional torus
can be triangulated on $2^{n + 1} - 1$ vertices~\cite{kuhnel1988combinatorial},
but the best known lower bounds are quadratic in~$n$;
see~\cite{arnoux1991kuhnel}. 

The number of faces of a simplicial complex $X$ can be bounded in terms of the
Betti numbers of $X$~\cite{billera1997face} or in terms of the minimal number of
generators of $\pi_{1}(X)$~\cite{murai2017face}. The effect of relations of
$\pi_{1}(X)$ on vertex numbers has been studied for cyclic torsion
groups~\cite{kalai1983enumeration, newman2019small} and for triangulations of
manifolds with non-free fundamental group~\cite{pavesic2019triangulations}. In
this paper, we consider the minimal number of vertices of a simplicial complex
with fundamental group~$\mathbb{Z}^n$:

\begin{theorem}
\label{asymptotic}
  We have the following asymptotic results for vertex numbers of simplicial
  complexes with fundamental group~$\mathbb{Z}^n$:

  \begin{enumerate}[(a)]
    \item
    There is a simplicial complex $X_{n}$ with $\pi_{1}(X_{n}) \cong
    \mathbb{Z}^{n}$ on $O(n)$ vertices.

    \item
    \label{asymptotic-lower}
    Every simplicial complex $X_{n}$ with $\pi_1(X_{n}) \cong \mathbb{Z}^{n}$
    has $\Omega(n^{3/4})$ vertices.
  \end{enumerate}
\end{theorem}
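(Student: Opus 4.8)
The two halves call for completely different methods, so I will address them separately.

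\emph{Upper bound, part (a).} The plan is to realize the standard presentation $\langle x_1,\dots,x_n\mid [x_i,x_j]\ (i<j)\rangle$ of $\mathbb{Z}^n$ by a $2$-dimensional simplicial complex. The obvious attempt — take a wedge of $n$ triangulated circles representing the generators and, for each pair $i<j$, glue on a triangulated torus $T_{ij}$ meeting the wedge along the circles for $x_i$ and $x_j$ — does give a simplicial complex $X$ with $\pi_1(X)\cong\mathbb{Z}^n$ (by iterated van Kampen each $T_{ij}$ contributes exactly the relation $[x_i,x_j]=1$ and nothing more), but it uses $\Theta(n^2)$ vertices because the filling attached to $\{i,j\}$ seems to need private vertices. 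The real content is to \emph{share vertices across all $\binom n2$ fillings}. I would fix a vertex set of size $O(n)$, say $[2n]$, and use a pair of orthogonal $1$-factorizations of $K_{2n}$ to prescribe the $2$-cells: the generators get encoded by the factors, and the defining orthogonality condition — two factors from different factorizations share at most one edge — is precisely what keeps the pieces associated to distinct pairs from overlapping in an uncontrolled way. One then checks that the result is a genuine simplicial complex and that a van Kampen / Mayer–Vietoris computation again produces exactly the $\binom n2$ commutator relations, with no surviving extra generators and no torsion. This last verification — that $O(n)$ vertices suffice to impose \emph{precisely} the commutator relations — is the whole difficulty, and it is where the chosen orthogonal factorizations (and the known existence of such a pair for $K_{2n}$) are essential.

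\emph{Lower bound, part (b): reduction.} Since passing to the $2$-skeleton changes neither $\pi_1$ nor the vertex set, assume $\dim X\le 2$. Fixing a spanning tree of the $1$-skeleton, the non-tree edges generate a presentation of $\pi_1(X)\cong\mathbb{Z}^n$ with fewer than $\binom v2$ generators, and each triangular $2$-cell gives a relator that is a word of length $\le 3$ in these generators with exponents in $\{-1,0,1\}$, hence of the form $g^ah^bi^c$. So it is enough to prove that every presentation $\langle S\mid R\rangle\cong\mathbb{Z}^n$ whose relators have the form $g^ah^bi^c$ satisfies $|S|=\Omega(n^{3/2})$; combined with $\binom v2>|S|$ this yields $v=\Omega(n^{3/4})$.

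\emph{Lower bound, part (b): the Sylvester--Gallai argument.} Let $\psi\colon S\to\mathbb{Z}^n\subset\mathbb{R}^n$ be the restriction of the abelianization map; after a routine Tietze clean-up assume all $\psi(s)\ne 0$, so the $[\psi(s)]$ form a configuration of $|S|$ points spanning $\mathbb{P}^{n-1}$. Each relator $g^ah^bi^c$ maps to $0$, i.e.\ $a\psi(g)+b\psi(h)+c\psi(i)=0$, so $[\psi(g)],[\psi(h)],[\psi(i)]$ are collinear and their plane is spanned by the decomposable vector $\psi(g)\wedge\psi(h)\in\Lambda^2\mathbb{R}^n$. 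The point where \emph{abelianness} (not just the abelianized relations) enters is that these decomposable vectors must span all of $\Lambda^2\mathbb{R}^n$: this follows from $H_2(\mathbb{Z}^n)\cong\Lambda^2\mathbb{Z}^n$ via Hopf's formula and a Magnus / commutator-collection computation showing that the degree-two parts of the relators generate $\Lambda^2$ modulo the subspace annihilated by $\psi$. Hence at least $\binom n2$ distinct lines of $\mathbb{P}^{n-1}$ each carry three configuration points; double counting shows the average point lies on $\Omega(n^2/|S|)$ such ``rich'' lines, i.e.\ is collinear within the configuration with $\Omega(n^2/|S|)$ other points. If $|S|=o(n^{3/2})$ this makes the configuration (on average) a $\delta$-Sylvester--Gallai configuration with $\delta=\omega(1/n)$, whence the fractional Sylvester--Gallai theorem bounds its dimension by $O(1/\delta)=o(n)$, contradicting that it spans $\mathbb{P}^{n-1}$; so $|S|=\Omega(n^{3/2})$. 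The main obstacle is upgrading the \emph{average} collinearity bound to the per-point hypothesis the theorem actually requires — by pruning low-degree points while controlling the resulting drop in dimension and in the count of rich lines, or by invoking a version of the theorem with an averaged hypothesis — and in handling the degenerate cases (parallel generators; relators whose degree-two part vanishes) so that the supply of $\binom n2$ rich lines is not eroded.
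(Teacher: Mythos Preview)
Your proposal identifies the right tools for both halves---orthogonal $1$-factorizations for (a), and the fractional Sylvester--Gallai theorem applied to a $3$-presentation for (b)---and the reduction in (b) to showing $|S|=\Omega(n^{3/2})$ matches the paper exactly. But in each part the mechanism you sketch differs from what the paper does, and in (b) the difference is a genuine gap.

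For (a), the paper does start from the same $\Theta(n^2)$-vertex wedge-of-tori complex you describe, but the factorizations are not used to ``encode the generators.'' Each torus $T_{ij}$ has two private interior vertices $w_{i,j,1},w_{i,j,2}$; a perfect matching $M$ of $[2n]$ gives a set $\{w_{i,j,k}:\{i,j\}\in M\}$ which is a ``spur''---a set of leaves off the basepoint that can be collapsed to a single vertex without changing $\pi_1$. Two such spurs coming from different matchings are simultaneously collapsible precisely when the matchings share at most one edge, which is exactly the orthogonality condition on the pair of $1$-factorizations. So the factorizations organize the $\Theta(n^2)$ interior vertices into $O(n)$ collapsible groups; no van Kampen re-verification is needed because collapsing a contractible subcomplex preserves homotopy type.

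For (b), your argument has the shape ``count $\binom{n}{2}$ rich lines $\Rightarrow$ average $\delta$-SG with $\delta=\omega(1/n)$ $\Rightarrow$ dimension $o(n)$, contradiction,'' and you correctly flag the average-to-pointwise upgrade as the obstacle. The paper does \emph{not} resolve this by pruning while controlling the drop in dimension; in fact it does the opposite. It applies an averaged SG variant (which itself prunes low-degree points) to obtain a subset $S'\subseteq S$ of \emph{low} dimension $d\le n/2$ containing all but $O(|S|^2/n)$ of the edges of a maximal \emph{sparse} relation set (sparse meaning at most $|S''|-1$ relations live in any $2$-dimensional $S''$). It then Tietze-transforms away the relations supported on $S'$ and quotients by $\mathrm{span}\,\phi(S')$ to obtain a presentation of $\mathbb{Z}^{n-d}$ whose deficiency is bounded above by $O(|S|^2/n)$; the deficiency bound $\mathrm{def}\,\mathbb{Z}^{n-d}\le (n-d)-\binom{n-d}{2}=-\Omega(n^2)$ then forces $|S|=\Omega(n^{3/2})$. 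The sparseness bookkeeping is essential to control how many relations can pile onto a single $2$-plane (the multiplicity issue your $\Lambda^2$-spanning claim would also need to address), and the quotient-plus-deficiency step replaces your attempt to keep the configuration high-dimensional after pruning. Your $H_2\cong\Lambda^2$ route may be salvageable, but as written it leaves both the spanning claim and the average-to-pointwise step unproved.
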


These results appear separately as \cref{upper,lower}; our precise upper bound
depends on parity, but is asymptotically~$4n$ in both cases:

\begin{restatable*}{theorem}{upper}
\label{upper}
  For $n \in \mathbb{N}$ with $n \ne 2, 3$, there exists:
  \begin{itemize}
    \item
    A simplicial complex $X_{2n}$ with $8n - 1$ vertices, $\pi_{1}(X_{2n}) \cong
    \mathbb{Z}^{2n}$.

    \item
    A simplicial complex $X_{2n - 1}$ with $8n - 3$ vertices, $\pi_{1}(X_{2n -
    1}) \cong \mathbb{Z}^{2n - 1}$.

  \end{itemize}
\end{restatable*}

To prove the $O(n)$ upper bound, we construct a complex $W_{n}$ on $n^{2} + n +
1$ vertices with fundamental group $\pi_{1}(W_{n}) \cong \mathbb{Z}^{n}$, and
then perform identifications that preserve $\pi_{1}(W_{n})$. The latter step
uses a result on orthogonal 1-factorizations of the complete graph $K_{2n}$,
which is implied by a result on Room squares; see
\cite{mullin1975existence,horton1981room,mendelsohn1985one}.

To prove the $\Omega(n^{3 / 4})$ lower bound, we relate simplicial complexes to
group presentations. Specifically, we define a \emph{3-presentation} as a group
presentation $\langle S | R\rangle$ whose relations are of the form
$g^{a}h^{b}i^{c}$ for $g, h, i \in S$; this is a generalization of triangular
presentations as studied
in~\cite{antoniuk2014collapse,antoniuk2015random,antoniuk2017sharp}. Then we
show that simplicial complexes give rise to 3-presentations.

For any group presentation $\langle S | R\rangle \cong \mathbb{Z}^{n}$, it is
known that $|R| \ge \binom{n}{2}$; this bound is sharp, by the presentation
$$\langle g_{1}, \ldots , g_{n} \,|\,
  g_{i}g_{j}g_{i}^{-1}g_{j}^{-1},\, i < j\rangle \cong \mathbb{Z}^{n}.$$
The bound $|R| \ge \binom{n}{2}$ already gives a $\Omega(n^{2 / 3})$ lower bound
for vertex numbers in \cref{asymptotic}(\ref{asymptotic-lower}). To strengthen
this bound, we use extremal results in discrete geometry (specifically, the
fractional Sylvester--Gallai results in
\cite{barak2013fractional,dvir2014improved,dvir2016sylvester}), to show that any
3-presentation $\langle S | R\rangle \cong \mathbb{Z}^{n}$ has $|S| =
\Omega(n^{3 / 2})$. This translates to a $\Omega(n^{3 / 4})$ lower bound in
\cref{asymptotic}(\ref{asymptotic-lower}).

We conjecture that the $\Omega(n^{3 / 4})$ bound in
\cref{asymptotic}(\ref{asymptotic-lower}) can be improved to $\Omega(n)$.
We also pose the following question for further research:

\begin{question}
\label{question-torsion}
  Can we prove an analogue to \cref{asymptotic} for fundamental group
  $(\mathbb{Z}_{k})^{n}$ (perhaps restricting $k$ to primes or prime powers)?
\end{question}

We will generally assume simplicial complexes $X$ are connected, since if $X$ is
disconnected, and $\pi_{1}(X, v) \cong G$, then the component $C$ of $X$
containing $v$ has fewer vertices than $X$, and $\pi_{1}(C, v) \cong G$. Under
this assumption, $\pi_{1}(X, v)$ is independent of $v$, so we will write
$\pi_{1}(X)$ instead.

\section{Upper bound}

In this section, we show that for all $n \in \mathbb{N}$, there exists a
simplicial complex $X_{n}$ on $O(n)$ vertices with fundamental group
$\pi_{1}(X_{n}) \cong \mathbb{Z}^{n}$. We start by constructing a complex
$W_{n}$ on $n^{2} + n + 1$ vertices with fundamental group $\pi_{1}(W_{n}) \cong
\mathbb{Z}^{n}$, and then obtain $X_{n}$ by identifying vertices and edges.
We write $[n]$ for~$\{1,2,\dots,n\}$.

\begin{definition}
\label{jn}
  For $n \in \mathbb{N}$, the simplicial complex $W_{n}$ is defined as follows:
  \begin{itemize}
    \item
    The vertex set consists of a vertex $u$, vertices $v_{i, k}$ for $i \in
    [n]$, $k \in [2]$, and vertices $w_{i, j, k}$ for $i, j \in [n]$, $i < j$,
    $k \in [2]$.

    \item
    The edge set includes edges $\{u, v_{i, 1}\}, \{v_{i, 1}, v_{i, 2}\},
    \{v_{i, 2}, u\}$ for all $i \in [n]$.
    
    \item
    For each $i, j \in [n]$ with $i < j$, we include edges and triangles as in
    the following diagram. (The vertices and edges on the boundary are those
    defined above, and each planar region corresponds to a triangle.)
    \begin{center}
    \begin{tikzpicture}
      \draw[fill] (0, 0) circle [radius = 0.03];
      \draw[fill] (0, 1) circle [radius = 0.03];
      \draw[fill] (0, 2) circle [radius = 0.03];
      \draw[fill] (0, 3) circle [radius = 0.03];
      \draw[fill] (1, 0) circle [radius = 0.03];
      \draw[fill] (1, 1) circle [radius = 0.03];
      \draw[fill] (1, 3) circle [radius = 0.03];
      \draw[fill] (2, 0) circle [radius = 0.03];
      \draw[fill] (2, 2) circle [radius = 0.03];
      \draw[fill] (2, 3) circle [radius = 0.03];
      \draw[fill] (3, 0) circle [radius = 0.03];
      \draw[fill] (3, 1) circle [radius = 0.03];
      \draw[fill] (3, 2) circle [radius = 0.03];
      \draw[fill] (3, 3) circle [radius = 0.03];

      \draw (0, 0) -- (0, 1) -- (0, 2) -- (0, 3);
      \draw (3, 0) -- (3, 1) -- (3, 2) -- (3, 3);
      \draw (0, 0) -- (1, 0) -- (2, 0) -- (3, 0);
      \draw (0, 3) -- (1, 3) -- (2, 3) -- (3, 3);
      \draw (0, 0) -- (1, 1) -- (2, 2) -- (3, 3);

      \draw (1, 0) -- (1, 1) -- (0, 1);
      \draw (2, 3) -- (2, 2) -- (3, 2);

      \draw (0, 2) -- (1, 3);
      \draw (2, 0) -- (3, 1);

      \draw (0, 1) -- (1, 3) -- (2, 2) -- (0, 1);
      \draw (1, 1) -- (2, 0) -- (3, 2) -- (1, 1);
      
      \node [below left] at (0, 0) {$u$};
      \node [above left] at (0, 3) {$u$};
      \node [below right] at (3, 0) {$u$};
      \node [above right] at (3, 3) {$u$};

      \node [left] at (0, 1) {$v_{j, 1}$};
      \node [left] at (0, 2) {$v_{j, 2}$};
      \node [right] at (3, 1) {$v_{j, 1}$};
      \node [right] at (3, 2) {$v_{j, 2}$};
      \node [below] at (1, 0) {$v_{i, 1}$};
      \node [below] at (2, 0) {$v_{i, 2}$};
      \node [above] at (1, 3) {$v_{i, 1}$};
      \node [above] at (2, 3) {$v_{i, 2}$};
      \node [right] at (1.1, 0.9) {$w_{i, j, 1}$};
      \node [left] at (1.9, 2.05) {$w_{i, j, 2}$};

    \end{tikzpicture}
    \end{center}
    (This diagram also gives a vertex-minimal triangulation of the torus.)
  \end{itemize}
\end{definition}

\begin{lemma}
  For all $n \in \mathbb{N}$, we have $\pi_{1}(W_{n}) \cong \mathbb{Z}^{n}$.
\end{lemma}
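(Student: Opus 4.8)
The plan is to build $W_n$ up one torus at a time and apply van Kampen's theorem at each stage. First I would isolate the ``skeleton'' $G = T_1 \cup \dots \cup T_n$, where $T_i$ denotes the (unfilled) triangle on $u, v_{i,1}, v_{i,2}$; since the $T_i$ pairwise meet only in $u$, the complex $G$ is a wedge of $n$ circles, so $\pi_1(G, u) = F_n = \langle g_1, \dots, g_n\rangle$ is free, with $g_i$ the class of the loop $T_i$. For each pair $i < j$, let $\mathbb{T}_{ij} \subseteq W_n$ be the subcomplex on the vertices $u, v_{i,k}, v_{j,k}, w_{i,j,k}$ together with the faces among them shown in \cref{jn}; this is the displayed $7$-vertex triangulation of the torus, and $W_n = G \cup \bigcup_{i<j} \mathbb{T}_{ij}$.

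Next I would fix an ordering $P_1, \dots, P_N$ of the $N = \binom{n}{2}$ pairs, set $W^{(0)} = G$ and $W^{(m)} = W^{(m-1)} \cup \mathbb{T}_{P_m}$, and compute the fundamental groups inductively. The geometric input needed is that $W^{(m-1)} \cap \mathbb{T}_{ij} = T_i \cup T_j$ whenever $P_m = \{i,j\}$: the vertices $w_{i,j,k}$ belong to no piece other than $\mathbb{T}_{ij}$ and so lie outside $W^{(m-1)}$, each ``mixed'' edge of $W_n$ joining a $v_{i,\bullet}$ to a $v_{j,\bullet}$ occurs only in $\mathbb{T}_{ij}$, and no $2$-face of $W_n$ has all three vertices among $u, v_{i,1}, v_{i,2}, v_{j,1}, v_{j,2}$. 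As $T_i \cup T_j$ is connected (it contains $u$), van Kampen's theorem (applied after the usual thickening to open sets) identifies $\pi_1(W^{(m)}, u)$ with the pushout of $\pi_1(W^{(m-1)}, u) \longleftarrow \pi_1(T_i \cup T_j, u) \longrightarrow \pi_1(\mathbb{T}_{ij}, u)$. Here $\pi_1(T_i \cup T_j, u) = F_2 = \langle g_i, g_j\rangle$ is free, $\pi_1(\mathbb{T}_{ij}, u) \cong \mathbb{Z}^2$, and the inclusion $T_i \cup T_j \hookrightarrow \mathbb{T}_{ij}$ carries $g_i, g_j$ to the two standard loop generators of the torus --- they are the horizontal and vertical sides of the square in \cref{jn} --- hence induces the abelianization $F_2 \twoheadrightarrow \mathbb{Z}^2$, whose kernel is normally generated by $[g_i,g_j]$. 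Consequently the pushout is $\pi_1(W^{(m-1)}, u)$ with the single relator $[g_i,g_j]$ adjoined, where $g_i, g_j$ now denote their images in $\pi_1(W^{(m-1)}, u)$ --- which, inductively, are the generators of the same name. Starting from $\pi_1(W^{(0)}, u) = F_n$ and running over all $N$ pairs gives $\pi_1(W_n, u) \cong \langle g_1, \dots, g_n \mid [g_i, g_j],\ i < j\rangle \cong \mathbb{Z}^n$.

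The genuinely routine parts are the thickening needed to invoke van Kampen and the tracking of generators through the successive pushouts. The main obstacle I anticipate is justifying the two claims read off from the diagram in \cref{jn}: that the gluing locus $W^{(m-1)} \cap \mathbb{T}_{ij}$ is exactly the wedge $T_i \cup T_j$ and nothing larger, and that $T_i, T_j$ form a standard generating pair for $\pi_1(\mathbb{T}_{ij})$ with defining relator precisely $[g_i, g_j]$ rather than some other word. Both follow from the explicit ``square with opposite sides identified'' structure of the triangulation, but this is the step where the combinatorics of the picture actually does work, and I would write it out carefully.
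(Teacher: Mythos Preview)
Your argument is correct in outline and reaches the same presentation, but by a different route than the paper. The paper observes in one stroke that $W_n$ is homeomorphic to the CW complex with a single $0$-cell, one $1$-cell $e_i$ for each~$i$ (the subdivided loop $T_i$), and one $2$-cell per pair $i<j$ attached along $e_ie_je_i^{-1}e_j^{-1}$; this is exactly the presentation complex of $\langle g_1,\dots,g_n\mid [g_i,g_j]\rangle$, so $\pi_1\cong\mathbb{Z}^n$ follows immediately. Your approach instead assembles $W_n$ as a union $G\cup\bigcup_{i<j}\mathbb{T}_{ij}$ and computes the fundamental group inductively via van Kampen. Both are standard; the paper's is shorter because it bypasses the inductive bookkeeping, while yours makes the role of each torus more explicit and avoids having to name the homeomorphism to the coarser CW structure.

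One small correction: your third bullet justifying $W^{(m-1)}\cap\mathbb{T}_{ij}=T_i\cup T_j$ is false as stated. There \emph{are} $2$-faces of $\mathbb{T}_{ij}$ whose vertices all lie in $\{u,v_{i,1},v_{i,2},v_{j,1},v_{j,2}\}$; for instance the region in the upper-left corner of the diagram in \cref{jn} is the triangle $\{u,v_{i,1},v_{j,2}\}$, and symmetrically $\{u,v_{i,2},v_{j,1}\}$ appears in the lower-right. This does not damage your conclusion, because each such face contains a mixed edge $\{v_{i,\bullet},v_{j,\bullet}\}$ and is therefore already excluded from $W^{(m-1)}$ by your second bullet. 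When you write out the careful version you flagged, simply drop the third bullet and note that any face of $\mathbb{T}_{ij}$ not contained in $T_i\cup T_j$ must use either a $w_{i,j,k}$ vertex or a mixed edge.
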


\begin{proof}
  Note that $W_{n}$ is homeomorphic to a CW complex $W_{n}'$ consisting of:
  
  \begin{itemize}
    \item
    A single 0-cell $u$.

    \item
    A 1-cell $e_{i}$ from $u$ to itself for each $i \in [n]$, corresponding to
    the edges $\{u, v_{i, 1}\}, \{v_{i, 1}, v_{i, 2}\},$ and~$\{v_{i, 2}, u\}$.

    \item
    A 2-cell $f_{i, j}$ attached along $e_{i}e_{j}e_{i}^{-1}e_{j}^{-1}$ for each
    $i, j \in [n]$, $i < j$, corresponding to the triangles in the diagram in
    \cref{jn} for $i, j$. (By $e_{i}^{-1}$ we denote attaching in the
    opposite direction along $e_{i}$.)

  \end{itemize}
  This gives a group presentation $\pi_{1}(W_{n}') \cong \langle S | R\rangle$,
  where
  $$S = \{e_{i} : i \in [n]\},\qquad
    R = \{e_{i}e_{j}e_{i}^{-1}e_{j}^{-1} : i, j \in [n],\, i < j\}.$$
  But $\langle S | R\rangle \cong \mathbb{Z}^{n}$, so $\pi_{1}(W_{n}) \cong
  \pi_{1}(W_{n}') \cong \mathbb{Z}^{n}$, as desired.
\end{proof}

We now establish the tools we need to perform identifications on $W_{n}$.

\begin{definition}
\label{spur}
  Let $X$ be a simplicial complex with vertex $u \in V(X)$. We say that a set $S
  \subseteq V(X) \setminus \{u\}$ is a \emph{spur} in $(X, u)$ if the following
  properties hold:
  \begin{enumerate}[(1)]
    \item
    Each $v \in S$ is adjacent to $u$ in $X$.
    
    \item
    No two distinct $v, v' \in S$ are adjacent in $X$.

    \item
    No two distinct $v, v' \in S$ have a common neighbor in $X$ other than $u$.

  \end{enumerate}
\end{definition}

\begin{definition}
  Let $X$ be a simplicial complex with vertex $u \in V(X)$. We say that two
  spurs $S, S'$ in $(X, u)$ are \emph{compatible} if $S \cap S' = \emptyset$,
  and there is at most one edge $\{v, v'\}$ in $X$ with $v \in S$, $v' \in S'$.
\end{definition}

\begin{lemma}
\label{collapse}
  Let $X$ be a simplicial complex with vertex $u \in V(X)$. If $S$ is a spur in
  $(X, u)$, then we can ``collapse'' $S$ to obtain a simplicial complex, which
  we denote $X / S$, as follows:
  \begin{itemize}
    \item
    Identify all vertices $v \in S$ to a single new vertex $w$.

    \item
    Identify all edges $\{u, v\}$ for $v \in S$ to a single edge $\{u, w\}$.

  \end{itemize}
  Moreover, $\pi_{1}(X / S) \cong \pi_{1}(X)$.
\end{lemma}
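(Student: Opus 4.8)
The plan is to verify two things: first, that the collapse $X/S$ is a well-defined simplicial complex (i.e., that the identifications do not create a multi-edge or collapse a simplex to something lower-dimensional), and second, that the collapse does not change the fundamental group. For the first point, the three properties of a spur are exactly what is needed. Suppose we identify the vertices $v \in S$ to a new vertex $w$. A potential problem would be two faces of $X$ mapping to the same face of $X/S$, or a face being ``degenerate'' after identification. The only way a degeneracy could occur is if some simplex of $X$ contains two distinct vertices $v, v' \in S$; but property~(2) forbids $\{v,v'\}$ from being an edge, so no simplex contains two elements of $S$. The only way two distinct faces of $X$ could be identified is if they differ only by swapping some $v \in S$ for some $v' \in S$ in their vertex sets; concretely, faces $\sigma \cup \{v\}$ and $\sigma \cup \{v'\}$ with $v \neq v'$ in $S$ and $v, v' \notin \sigma$. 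If $\sigma \neq \emptyset$, pick $x \in \sigma$: then $x$ is a common neighbor of $v$ and $v'$, and $x \neq u$ is impossible by property~(3), so $\sigma = \{u\}$, i.e., the only collision is among the edges $\{u,v\}$, $v \in S$ — which we are identifying on purpose. Hence $X/S$, with vertex set $(V(X) \setminus S) \cup \{w\}$ and faces the images of faces of $X$, is a genuine simplicial complex, and the quotient map $q \colon X \to X/S$ is simplicial.

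For the fundamental group, the cleanest approach is to exhibit $X$ and $X/S$ as homotopy equivalent, or better, to realize $X/S$ as obtained from $X$ by a sequence of elementary collapses together with a homotopy equivalence. Concretely: for each $v \in S$ other than one chosen representative $v_0$, the edge $\{u,v\}$ is a free face — by property~(3), $v$ has no neighbor besides $u$ that it shares with any other vertex, and by property~(2) $v$ is not adjacent to any other vertex of $S$; combined with property~(1), one checks the edge $\{u,v\}$ lies in no triangle except possibly those forced by shared neighbors, so after analyzing the link of $v$ we see that collapsing $v$ onto $u$ is a deformation retraction. Iterating over all $v \in S \setminus \{v_0\}$ contracts each such edge, and the resulting complex is combinatorially isomorphic to $X/S$ (with $w$ playing the role of the surviving vertex $v_0$, now the common image). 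Each edge contraction is a homotopy equivalence, so $\pi_1(X/S) \cong \pi_1(X)$. One should be slightly careful that contracting one edge $\{u,v\}$ does not destroy the spur property for the remaining vertices — but identifying $v$ with $u$ only removes $v$ from the complex and adds no new adjacencies among the surviving spur vertices, so properties~(1)--(3) persist, and the induction goes through.

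An alternative, perhaps more robust, route avoids edge contractions (which are delicate simplicially) by working with a CW model: build a CW complex $Y$ from $X$ by attaching, for the chosen representative $v_0$ and each other $v \in S$, a $2$-cell along the loop formed by $\{u,v\}$, $\{u,v_0\}$ — but this changes the homotopy type unless those $2$-cells are already ``there.'' The truly clean statement is that $q \colon |X| \to |X/S|$ is a homotopy equivalence because it is the quotient by contracting a forest of edges (a subcomplex that is a tree, namely the star of $u$ restricted to $S$ together with... ) — and contracting a contractible subcomplex along a cofibration is a homotopy equivalence. I would make this precise: let $A \subseteq X$ be the subcomplex with vertex set $S \cup \{u\}$ and edges $\{u,v\}$ for $v \in S$ (properties (1),(2) guarantee these are all the faces of $X$ inside this vertex set). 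Then $A$ is a cone with apex $u$, hence contractible, the inclusion $A \hookrightarrow X$ is a cofibration, and $|X/S| = |X|/|A|$ (this is exactly where property~(3) is used, to ensure the quotient $|X|/|A|$ coincides with the simplicial quotient $X/S$ and does not identify anything extra). Therefore $|X| \to |X|/|A|$ is a homotopy equivalence, giving $\pi_1(X/S) \cong \pi_1(X)$.

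The main obstacle I anticipate is the bookkeeping in showing $|X/S|$ is literally $|X|/|A|$ as topological spaces — i.e., that no two points of $|X| \setminus |A|$ get identified and that the simplicial structure on $X/S$ is the claimed one — which is precisely the content of the three spur axioms and deserves a careful (if routine) verification of which faces of $X$ map to the same face of $X/S$. Once that is in hand, the homotopy-theoretic conclusion is immediate from the standard fact that collapsing a contractible subcomplex across a cofibration preserves homotopy type, and in particular preserves $\pi_1$.
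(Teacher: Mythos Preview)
Your verification that $X/S$ is a well-defined simplicial complex is correct and essentially identical to the paper's.

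The gap is in the fundamental-group half. In your ``robust'' route you assert that $|X/S| = |X|/|A|$, where $A$ is the star on $\{u\} \cup S$ with edges $\{u,v\}$. This is false: collapsing $|A|$ to a point identifies $u$ with the image of every $v \in S$, whereas in $X/S$ the vertex $u$ and the new vertex $w$ remain distinct, joined by the edge $\{u,w\}$. (Try $X = A$ itself: then $|X|/|A|$ is a point but $|X/S|$ is a $1$-simplex.) You flag this identification as ``bookkeeping,'' but it is not --- the two spaces are genuinely different, and property~(3) does nothing to rescue it. Your first route has the same defect: contracting the edges $\{u,v\}$ one by one sends each $v$ to $u$, so when you are done all of $S$ has been merged into $u$; the result is again $|X|/|A|$, not $X/S$.

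The paper's fix is one extra move on the other side. Let $B \subseteq X/S$ be the subcomplex consisting of the edge $\{u,w\}$ and its endpoints. Then $B$ is contractible, and $X/A$ and $(X/S)/B$ are isomorphic CW complexes (in each, $u$, the vertices of $S$ or $w$, and the connecting edge(s) are all crushed to a single point). Hence
\[
X \;\simeq\; X/A \;\cong\; (X/S)/B \;\simeq\; X/S,
\]
invoking the cofibration argument once on each end. Your instinct to collapse a contractible subcomplex was the right one; you just need to do it symmetrically.
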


\begin{proof}
  We may perform the identifications in the category of CW complexes, but we
  need to prove that the result is a simplicial complex. Since no adjacent
  vertices are identified, it remains to prove that no two distinct faces $f,
  f'$ of $X$ have the same vertex set in $X / S$, other than those explicitly
  identified.

  If $f, f'$ are distinct faces of $X$ with the same vertex set in $X / S$, then
  there exist $v, v' \in S$ with $v \in f$, $v' \in f'$. If $f, f' \subseteq
  \{u\} \cup S$, then $f, f'$ are either $\{v\}, \{v'\}$ or $\{u, v\}, \{u,
  v'\}$, and are explicitly identified. Hence we may assume that $f, f'$ both
  contain a vertex $x \not\in \{u\} \cup S$. But then $x$ is a common neighbor
  of $v, v'$, a contradiction. Hence $X / S$ is a simplicial complex.

  Now let $A$ be the subcomplex of $X$ with vertices $u, S$ and edges $\{u, v\}$
  for all $v \in S$, and let $B$ be the subcomplex of $X / S$ with vertices $u,
  w$ and edge $\{u, w\}$. Consider the quotients $X / A$, $(X / S) / B$ in the
  category of CW complexes, and note that $X / A \cong (X / S) / B$. Since $A,
  B$ are contractible, we have homotopy equivalences $X / A \simeq X$ and $(X /
  S) / B \simeq X / S$ (see e.g.,~\cite[Prop.~0.17]{hatcher2002algebraic}). By
  transitivity, we have $X \simeq X / S$, so $\pi_{1}(X) \cong \pi_{1}(X / S)$
  as desired.
\end{proof}

\begin{lemma}
\label{collapse-compatible}
  Let $X$ be a simplicial complex with vertex $u \in V(X)$.
  \begin{enumerate}[(a)]
    \item
    If $S, S'$ are compatible spurs in $(X, u)$, then $S'$ is a spur in $(X / S,
    u)$.

    \item
    If $S, S', S''$ are pairwise compatible spurs in $(X, u)$, then $S', S''$
    are compatible spurs in $(X / S, u)$.

  \end{enumerate}
\end{lemma}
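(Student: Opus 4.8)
The plan is to verify the defining properties of a spur (resp.\ of compatible spurs) directly, tracking how adjacency changes when we pass from $X$ to $X/S$. The key observation is that the collapse $X/S$ only identifies the vertices of $S$ to a single vertex $w$ and the edges $\{u,v\}$, $v\in S$, to a single edge $\{u,w\}$; every other face survives, and the neighbors of $w$ in $X/S$ are exactly $u$ together with all vertices that were neighbors of some $v\in S$ in $X$. However, by property (2) of the spur $S$, no vertex of $S$ is adjacent to another vertex of $S$, and by compatibility $S'\cap S=\emptyset$; hence among the vertices other than $u,w$, adjacency in $X/S$ is inherited unchanged from $X$.

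For part (a), I would check the three conditions for $S'$ in $(X/S,u)$. Condition (1): each $v'\in S'$ is adjacent to $u$ in $X$, hence in $X/S$ (the edge $\{u,v'\}$ is not among those collapsed, since $v'\notin S$). Condition (2): if $v',v''\in S'$ were adjacent in $X/S$, then since neither equals $w$ (as $S'\cap S=\emptyset$) the edge $\{v',v''\}$ must already be present in $X$, contradicting that $S'$ is a spur in $X$. Condition (3): suppose $v',v''\in S'$ have a common neighbor $x\neq u$ in $X/S$. If $x\neq w$, then $x$ is a common neighbor of $v',v''$ in $X$ other than $u$, contradicting that $S'$ is a spur. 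If $x=w$, then in $X$ each of $v',v''$ is adjacent to some vertex of $S$; say $v'$ is adjacent to $a\in S$ and $v''$ to $b\in S$. The edges $\{v',a\}$ and $\{v'',b\}$ are then two edges between $S'$ and $S$, so compatibility forces them to be the same edge, i.e.\ $a=b$ and $v'=v''$ — a contradiction. Hence $x=u$ is impossible to avoid, proving (3), so $S'$ is a spur in $(X/S,u)$.

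For part (b), given that $S,S',S''$ are pairwise compatible, part (a) already gives that $S'$ and $S''$ are spurs in $(X/S,u)$, so it remains to show they are compatible there: that $S'\cap S''=\emptyset$ and that at most one edge of $X/S$ joins $S'$ to $S''$. Disjointness is inherited from $X$. For the edge count, observe that any edge of $X/S$ between $S'$ and $S''$ is (since neither $S'$ nor $S''$ contains $w$) an edge of $X$ between $S'$ and $S''$; by compatibility of $S'$ and $S''$ in $X$ there is at most one such edge. Thus $S',S''$ are compatible spurs in $(X/S,u)$.

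The main obstacle is the bookkeeping in condition (3) of part (a): one must rule out the possibility that the newly created vertex $w$ becomes an ``accidental'' common neighbor of two distinct vertices of $S'$. This is exactly the case the definition of compatibility is designed to forbid — the single allowed edge between $S$ and $S'$ means $w$ can have at most one neighbor in $S'$ — so the argument goes through, but this is the step where the hypotheses are genuinely used and where care is needed.
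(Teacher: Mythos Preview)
Your proposal is correct and follows essentially the same approach as the paper's proof: verifying the spur conditions directly, using disjointness of $S$ and $S'$ for conditions (1)--(2) and the ``at most one edge between $S$ and $S'$'' clause of compatibility for condition (3). The paper's version is much terser---it simply asserts which hypothesis handles which condition---whereas you spell out in detail why the new vertex $w$ cannot become an accidental common neighbor; your more explicit bookkeeping is sound (the phrase ``hence $x=u$ is impossible to avoid'' is awkwardly worded, but the meaning is clear).
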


\begin{proof}
  For (a), conditions (1), (2) in \cref{spur} hold since $S, S'$ are disjoint,
  and condition (3) holds since $S, S'$ have at most one edge between them.

  For (b), $S', S''$ are spurs in $(X / S, u)$ by (a), and compatibility follows
  from the fact that collapsing $S$ does not affect vertices or edges among $S'
  \cup S''$.
\end{proof}

\begin{lemma}
\label{compatible}
  For $n \in \mathbb{N}$ with $n \ne 2, 3$, the vertices $w_{i, j, k}$ of
  $W_{2n}$ (or $W_{2n - 1}$) can be partitioned into $4n - 2$ pairwise
  compatible spurs in $(W_{2n}, u)$ (or $(W_{2n - 1}, u)$).
\end{lemma}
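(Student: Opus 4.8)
The plan is to translate the spur and compatibility conditions for the $w$-vertices into a purely combinatorial statement about perfect matchings of $K_{2n}$, and then invoke the existence of orthogonal $1$-factorizations; the complex $W_{2n-1}$ is the induced subcomplex of $W_{2n}$ cut out by its first $2n-1$ blocks, so it will come along by restriction. First I would record the local structure of the $w$-vertices: reading off the diagram in \cref{jn}, the neighbors of $w_{i,j,k}$ in $W_{2n}$ are exactly $u$, the twin $w_{i,j,3-k}$, and $v_{i,1},v_{i,2},v_{j,1},v_{j,2}$. Two consequences: the only edges among $w$-vertices are the within-block pairs $\{w_{i,j,1},w_{i,j,2}\}$, and for distinct blocks $\{i,j\}\ne\{i',j'\}$ the vertices $w_{i,j,k}$ and $w_{i',j',k'}$ have a common neighbor other than $u$ if and only if $\{i,j\}\cap\{i',j'\}\ne\emptyset$. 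Checking the three conditions of \cref{spur}, this shows that a set $S$ of $w$-vertices is a spur in $(W_{2n},u)$ precisely when it contains at most one vertex of each block and the blocks it meets form a matching of $K_{2n}$; equivalently, $S$ is obtained from a matching of $K_{2n}$ by choosing, for each of its edges $e$, one of the two copies $w_{e,1},w_{e,2}$.

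Next I would reduce to matchings. Suppose the $w$-vertices of $W_{2n}$ are partitioned into spurs $S_1,\dots,S_m$. The twin vertices of a block are adjacent, hence lie in two distinct spurs, so counting vertices gives $2\binom{2n}{2}=(4n-2)n$; since each spur has at most $n$ vertices this forces $m=4n-2$ and each $S_\ell$ to arise from a perfect matching $M_\ell$ of $K_{2n}$. The partition property then says exactly that each edge of $K_{2n}$ lies in precisely two of the $M_\ell$ --- the two spurs holding its two copies. For distinct $\ell,\ell'$ we automatically have $S_\ell\cap S_{\ell'}=\emptyset$, and, since the only edges among $w$-vertices are within blocks, the number of edges of $W_{2n}$ between $S_\ell$ and $S_{\ell'}$ equals $|M_\ell\cap M_{\ell'}|$. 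So a partition of the $w$-vertices into $4n-2$ pairwise compatible spurs exists if and only if there are $4n-2$ perfect matchings of $K_{2n}$, each edge lying in exactly two of them and any two sharing at most one edge.

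Such a family is precisely the union of two orthogonal $1$-factorizations $\mathcal F=\{F_1,\dots,F_{2n-1}\}$ and $\mathcal G=\{G_1,\dots,G_{2n-1}\}$ of $K_{2n}$: every edge lies in one $F_i$ and one $G_j$; $|F_i\cap F_{i'}|=|G_j\cap G_{j'}|=0$ for $i\ne i'$ and $j\ne j'$; and $|F_i\cap G_j|\le 1$ by orthogonality. Concretely, one puts the copy $w_{e,1}$ of each edge $e$ into the spur indexed by the factor of $\mathcal F$ containing $e$, and $w_{e,2}$ into the spur indexed by the factor of $\mathcal G$ containing $e$. A pair of orthogonal $1$-factorizations of $K_{2n}$ --- equivalently, a Room square of side $2n-1$ --- exists exactly when $2n-1\notin\{3,5\}$, i.e.\ when $n\ne 2,3$ \cite{mullin1975existence,horton1981room,mendelsohn1985one}, which is the hypothesis of the lemma. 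Finally, for $W_{2n-1}$ I would take the $4n-2$ spurs just built for $W_{2n}$ and delete from each the unique vertex $w_{i,j,k}$ with $j=2n$; since $W_{2n-1}$ is the induced subcomplex of $W_{2n}$ on the remaining relevant vertices, passing to a smaller vertex set carries spurs to spurs and compatible pairs to compatible pairs, and this deletion removes exactly the $w$-vertices absent from $W_{2n-1}$.

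The step I expect to be the crux is noticing, between these reductions, that one cannot simply double a single $1$-factorization of $K_{2n}$ by coloring each of its factors first with $k=1$ and then with $k=2$: the two spurs coming from one factor would be joined by $n$ within-block edges, violating compatibility as soon as $n\ge 2$. Forcing pairwise compatibility genuinely requires orthogonal factorizations, which is exactly where the Room-square existence theorem, with its two exceptional small sides, enters.
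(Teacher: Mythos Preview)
Your proof is correct and takes essentially the same approach as the paper: build the $4n-2$ spurs from an orthogonal pair of $1$-factorizations $(\mathcal{F},\mathcal{G})$ of $K_{2n}$ by assigning $w_{e,1}$ to the factor of $\mathcal{F}$ containing $e$ and $w_{e,2}$ to the factor of $\mathcal{G}$ containing $e$, then restrict to handle $W_{2n-1}$. You add a cleaner conceptual wrapper---the explicit characterization of $w$-spurs as matchings, the counting that forces $4n-2$ perfect matchings, and the remark that doubling a single factorization fails---which the paper omits in favor of direct verification, but the underlying construction is identical.
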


\begin{proof}
  A \emph{1-factorization} of the complete graph on vertex set $[2n]$ is a
  partition of its edges into perfect matchings. An \emph{orthogonal pair} of
  1-factorizations is a pair of 1-factorizations, such that no two edges appear
  in the same matching in both factorizations. By~\cite{mullin1975existence}
  (see also~\cite{horton1981room,mendelsohn1985one}), such a pair $(F_{1},
  F_{2})$ exists for all $n \in \mathbb{N}$ with $n \ne 2, 3$.

  Then for each matching $M \in F_{k}$, we construct a spur $S_{M}$ in $(W_{2n},
  u)$:
  $$S_{M} = \{w_{i, j, k} : \{i, j\} \in M\}$$
  To see that $S_{M}$ is a spur, note that the neighbors of $w_{i, j, k}$ in
  $W_{2n}$ are $u$, and some vertices of the form $v_{i, k'}, v_{j, k'}, w_{i,
  j, k'}$ for $k' \in [2]$, so conditions~(1) and~(2) hold. Since $M$ is a
  matching, condition~(3) holds.

  Then the $S_{M}$ are disjoint, and the only edges between vertices in $S_{M},
  S_{M'}$ for distinct $M, M'$ are the edges $\{w_{i, j, 1}, w_{i, j, 2}\}$,
  which arise for $M \in F_{1}$, $M' \in F_{2}$ with $\{i, j\} \in M$, $\{i, j\}
  \in M'$. Then the orthogonality of $(F_{1}, F_{2})$ implies that the $S_{M}$
  are pairwise compatible, and there are $2(2n - 1) = 4n - 2$ such $S_{M}$.

  Viewing $W_{2n - 1}$ as an induced subcomplex of $W_{2n}$, the $S_{M}$ remain
  pairwise compatible spurs in $(W_{2n - 1}, u)$, upon deleting the missing
  vertices.
\end{proof}

Our promised upper bound follows:

\upper

\begin{proof}
  Starting with $W_{2n}$ or $W_{2n - 1}$, apply \cref{compatible} to obtain $4n
  - 2$ pairwise compatible spurs. Collapse these spurs, one by one, via
  \cref{collapse}, to obtain $X_{2n}$ or $X_{2n - 1}$ with $\pi_{1}(X_{2n})
  \cong \mathbb{Z}^{2n}$, $\pi_{1}(X_{2n - 1}) \cong \mathbb{Z}^{2n - 1}$; note
  that \cref{collapse-compatible} guarantees that the remaining spurs remain
  compatible. The remaining vertices are $u$, the $v_{i, k}$, and one vertex for
  each spur, so:
  \begin{itemize}
    \item
    The number of vertices in $X_{2n}$ is $1 + 4n + (4n - 2) = 8n - 1$.

    \item
    The number of vertices in $X_{2n - 1}$ is $1 + 2(2n - 1) + (4n - 2) = 8n -
    3$.

  \end{itemize}
  This completes the proof.
\end{proof}

\section{Lower bound}

In this section, we show that a simplicial complex $X$ with fundamental group
$\pi_{1}(X) \cong \mathbb{Z}^{n}$ has $\Omega(n^{3/4})$ vertices. We begin by
relating simplicial complexes to group presentations:

\begin{definition}
  Given a group $G$, a \emph{3-presentation} of $G$ is a group presentation
  $\langle S | R\rangle \cong G$ where each relation in $R$ is one of the
  following:
  \begin{itemize}
    \item
    $\langle\rangle$ (the empty word).

    \item
    $g^{a}$, where $g \in S$ and $a \in \mathbb{Z}$.

    \item
    $g^{a}h^{b}$, where $g, h \in S$ and $a, b \in \mathbb{Z}$.

    \item
    $g^{a}h^{b}i^{c}$, where $g, h, i \in S$ and $a, b, c \in \mathbb{Z}$.

  \end{itemize}
  We say such a word $w$ is in \emph{normal form} if the generators used are all
  distinct, and $a, b, c \ne 0$. To ``write $r \in R$ in normal form'' means to
  find $w$ in normal form such that $r$ and $w$ are conjugates in~$\langle
  S\rangle$; in this case we write $r \leadsto w$.
\end{definition}

For example, we describe a 3-presentation $\langle S | R\rangle \cong
\mathbb{Z}^{n}$, derived from the presentation for $\mathbb{Z}^{n}$ given in the
introduction:
\begin{align*}
  S &= \{g_{i} : i \in [n]\} \cup \{h_{i, j} : i, j \in [n],\, i < j\}\\
  R &= \{g_{i}g_{j}h_{i, j},\, g_{j}g_{i}h_{i, j} : i, j \in [n],\, i < j\}
\end{align*}

We will use the phrase, ``Let $\phi \colon \langle S | R \rangle \cong G$ be a
3-presentation,'' to mean, ``Let $\langle S | R \rangle \cong G$ be a
3-presentation, and fix an isomorphism $\phi \colon \langle S | R\rangle \to
G$.''

\begin{lemma}
\label{normal}
  Let $\langle S | R\rangle \cong G$ be a 3-presentation. Then any relation $r
  \in R$ can be written uniquely in normal form, up to the following
  conjugacies:
  \begin{itemize}
    \item
    $g^{a}h^{b}, h^{b}g^{a}$ are conjugates in $\langle S\rangle$.

    \item
    $g^{a}h^{b}i^{c}, h^{b}i^{c}g^{a}, i^{c}g^{a}h^{b}$ are conjugates in
    $\langle S \rangle$.

  \end{itemize}
\end{lemma}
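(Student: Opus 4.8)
The plan is to prove existence and uniqueness of the normal form essentially by a case analysis on the ``shape'' of the relator $r \in R$, working inside the free group $\langle S\rangle$. First I would observe that cyclic permutation of the letters of a word is always a conjugation in the free group: if $r = w_1 w_2$ then $w_2 w_1 = w_1^{-1} r w_1$, so the listed conjugacies ($g^a h^b \sim h^b g^a$ and the three cyclic rotations of $g^a h^b i^c$) are genuine. So the substance of the lemma is that, after collecting like generators and deleting trivial powers, the resulting word can always be arranged into \emph{one} of the admissible shapes, and that this shape (together with the exponents) is forced.

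For existence I would go shape by shape. The empty word and $g^a$ are already in normal form (or become so after deleting $g^0$). For $g^a h^b$: if $g = h$ this collapses to $g^{a+b}$, a one-generator relator, and if $a+b=0$ to the empty word; otherwise it is already normal. The main case is $g^a h^b i^c$. Here the subtlety is that the three letters need not be distinct. If all three coincide, we get $g^{a+b+c}$. If exactly two coincide, I would use a cyclic rotation to bring the two equal generators adjacent — this is where the cyclic-conjugacy freedom is essential, since e.g. $g^a h^b g^c$ has the two $g$'s separated, but rotating gives $g^c g^a h^b \sim g^{a+c} h^b$, reducing to the two-generator case, which we have already normalized. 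Once that reduction is in place, deleting any zero exponents finishes the proof of existence.

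For uniqueness I would pass to the abelianization, or more precisely track the exponent sum of each generator in $r$, which is a conjugacy invariant in the free group. A normal-form word $w$ with distinct generators and nonzero exponents is completely determined by the multiset $\{(g,a),(h,b),\dots\}$ of (generator, nonzero-exponent) pairs, and this multiset is exactly the support of the exponent-sum vector of $r$. Hence any two normal forms of $r$ have the same underlying set of letters with the same exponents, so they differ only by the order in which those letters are written — and for two letters the two orders are the listed conjugates, while for three letters the only orders reachable by conjugation in the free group (since $r$ has length $\le 3$ as a word, there is no room to permute non-adjacent letters except by rotation) are the three cyclic rotations. This is precisely the ``up to the following conjugacies'' clause.

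The main obstacle I anticipate is the bookkeeping in the three-generator case when generators repeat: one must be careful that a word like $g^a h^b g^c$ really does reduce, via an admissible conjugacy, to a two-generator (or fewer) normal form, rather than getting stuck in a shape not on the list — and one must check that the two-letter case's conjugacy ($g^a h^b \sim h^b g^a$) is the \emph{only} ambiguity that survives, i.e. that $g^a h^b$ and $g^a h^{b'}$ with $b \ne b'$ are never conjugate, which again follows from the exponent-sum invariant. Everything else is routine once the cyclic-conjugacy observation is recorded.
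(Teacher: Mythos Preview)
Your proof is correct and matches the paper's approach: existence by the same case reduction (drop zero exponents, merge adjacent repeated generators, and conjugate $g^{a}h^{b}g^{c}$ to $g^{a+c}h^{b}$), and uniqueness via the conjugacy structure of the free group. Your exponent-sum invariant makes explicit what the paper compresses into ``uniqueness follows from considering the conjugates of reduced words~$w$''; in both cases the underlying fact is that cyclically reduced words in $\langle S\rangle$ are conjugate precisely when one is a cyclic rotation of the other (so your parenthetical ``length $\le 3$'' should be read as block-length, not free-group length).
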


\begin{proof}
  If $r$ is not in normal form, we can apply one of the following steps:
  \begin{itemize}
    \item
    If $r$ has a zero exponent or identical adjacent generators, rewrite $r$
    with fewer generators. (For example, $g^{0}hi$ becomes $hi$; $g^{1}g^{2}h$
    becomes $g^{3}h$.)

    \item
    If $r = g^{a}h^{b}g^{c}$, then replace $r$ with its conjugate $g^{a +
    c}h^{b}$.

  \end{itemize}
  Each such step reduces $k$ in $r = \prod_{i = 1}^{k}g_{i}^{a_{i}}$, so this
  process terminates. Uniqueness follows from considering the conjugates of
  reduced words $w$.
\end{proof}

\begin{lemma}
\label{3-presentation}
  If $X$ is a simplicial complex on $k$ vertices with fundamental group
  $\pi_{1}(X) \cong G$, then there exists a 3-presentation $\langle S | R\rangle
  \cong G$ with $|S| \le \binom{k}{2}$ and $|R| \le \binom{k}{3}$.
\end{lemma}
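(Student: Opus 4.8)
The plan is to invoke the standard presentation of the fundamental group of a $2$-complex, specialized to the simplicial setting. Label the vertices $1,\dots,k$, give $X$ its natural CW structure (one $i$-cell per $i$-simplex), orient each edge $\{a,b\}$ from $a$ to $b$ when $a<b$, and fix a maximal spanning tree $T$ of the $1$-skeleton $X^{(1)}$. I would then collapse $T$: since $T$ is contractible, $\cite[Prop.~0.17]{hatcher2002algebraic}$ (already used in the paper) gives $\pi_{1}(X)\cong\pi_{1}(X/T)$, and $X/T$ is a CW complex with a single $0$-cell. The textbook single-$0$-cell computation (van Kampen, see e.g.\ $\cite[Section~1.2]{hatcher2002algebraic}$) now presents $\pi_{1}(X/T)$ with generators the $1$-cells and relators the boundary words of the $2$-cells; equivalently, $\pi_{1}(X)$ has a presentation whose generators are the edges of $X$ not in $T$ and whose relators are the boundary words of the $2$-simplices, an edge lying in $T$ being read as the identity.

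Concretely, set $S=\{g_{ab}:\{a,b\}\in E(X)\setminus T\}$. A connected simplicial complex on $k$ vertices has a spanning tree with exactly $k-1$ edges, so $|S|=|E(X)|-(k-1)\le\binom{k}{2}-(k-1)\le\binom{k}{2}$. For a $2$-simplex $\{a,b,c\}$ with $a<b<c$, traversing its boundary $a\to b\to c\to a$ yields the relator obtained from $g_{ab}\,g_{bc}\,g_{ac}^{-1}$ by deleting the letters whose edge lies in $T$. The three edges of a triangle are pairwise distinct, and at least one of them is not in $T$ because $T$ is a forest; hence this relator is a reduced word of the form $g^{a}h^{b}i^{c}$, $g^{a}h^{b}$, or $g^{a}$ with all exponents $\pm1$ — in every case an admissible relation for a $3$-presentation (indeed already in normal form). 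Taking $R$ to be the set of these relators gives $|R|\le\binom{k}{3}$ (one per $2$-simplex), and $\langle S\mid R\rangle\cong\pi_{1}(X)\cong G$ is the desired $3$-presentation.

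The substantive point — and the only place where care is needed — is citing the $\pi_{1}$-presentation theorem in a form valid for complexes with more than one vertex; going through $X/T$ sidesteps this, at the cost of noting that the generators and relators of $X/T$ are exactly the non-tree edges and the triangle boundary words described above. Everything else is routine: a triangle contributes a length-$\le 3$ relator with unit exponents essentially by definition of ``triangle'', and the bounds $|S|\le\binom{k}{2}$, $|R|\le\binom{k}{3}$ are immediate counts. (If $X$ is disconnected one first passes to the component of the basepoint, per the paper's standing convention; and if distinct $2$-simplices happen to give the same word then $|R|$ only drops, which is harmless since we only need an upper bound.)
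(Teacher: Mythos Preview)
Your proof is correct and follows essentially the same route as the paper: reduce to the connected case, collapse a spanning tree $T$ of the $1$-skeleton so that $X/T$ has a single $0$-cell, and read off the presentation with generators the non-tree edges and relators the triangle boundary words with exponents $\pm1$. Your version is slightly more explicit (giving the orientation and the exact relator for each triangle, and noting the sharper bound $|S|\le\binom{k}{2}-(k-1)$ and that a tree cannot contain all three sides of a triangle), but the argument is the same.
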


\begin{proof}
  Assume $X$ is connected, otherwise reduce to the component of $X$ containing
  the basepoint. Then the 1-skeleton of $X$ is a connected graph; choose a
  spanning tree $T$ of this graph. View $X$ as a CW complex and $T$ as a
  contractible subcomplex of $X$, and consider the quotient complex $X / T$,
  which is homotopy equivalent to $X$ (see e.g.,~\cite[Prop.~0.17]{hatcher2002algebraic}), so $\pi_{1}(X / T) \cong G$.

  Now $X / T$ has a single 0-cell, and hence can be viewed as the presentation
  complex of some group presentation $\langle S | R\rangle \cong G$ upon
  choosing a direction for each 1-cell. The 1-cells correspond bijectively to
  the generators, and arise from distinct edges of $X$, so $|S| \le
  \binom{k}{2}$. The 2-cells correspond bijectively to the relations, and arise
  from distinct triangles of $X$, so $|R| \le \binom{k}{3}$.

  Moreover, each $r \in R$ is of one of the following forms, depending on how
  many edges of the corresponding triangle in $X$ lie in $T$:
  \begin{itemize}
    \item
    $g^{a}$, where $g \in S$ are distinct and $a \in \{\pm 1\}$.

    \item
    $g^{a}h^{b}$, where $g, h \in S$ are distinct and $a, b \in \{\pm 1\}$.

    \item
    $g^{a}h^{b}i^{c}$, where $g, h, i \in S$ are distinct and $a, b, c \in \{\pm
    1\}$.

  \end{itemize}
  In particular, $\langle S | R\rangle$ is a 3-presentation, which completes the
  proof.
\end{proof}

Hence we may turn our attention to proving lower bounds on $|S|$ and $|R|$ for
3-presentations $\langle S | R\rangle$ of given groups. We will use the concept
of deficiency:

\begin{definition}
  The \emph{deficiency} of a group presentation $P = \langle S | R\rangle$ is
  $\text{def }P = |S| - |R|$. The \emph{deficiency}, $\text{def }G$, of a group
  $G$ is the maximum of $\text{def }P$ over all finite presentations $P$ of~$G$.
\end{definition}

Then we have an inequality in group homology due to
Epstein~\cite{epstein1961finite}:
$$\text{def }G \le \text{rank }H_{1}(G; \mathbb{Z}) - s(H_{2}(G; \mathbb{Z})),$$
where $s(H_{2}(G; \mathbb{Z}))$ is the minimum number of generators of $H_{2}(G;
\mathbb{Z})$. (For a related inequality in terms of free resolutions,
see~\cite{swan1965minimal}.) In particular, when $G$ is~$\mathbb{Z}^{n}$, we
obtain $\text{def }\mathbb{Z}^{n} \le n - \binom{n}{2}$. Thus we have several
constraints on the size of a presentation of $\mathbb{Z}^{n}$; if $\langle S |
R\rangle \cong \mathbb{Z}^{n}$, then
\begin{itemize}
  \item
  $|S| \ge n$.

  \item
  $|R| - |S| \ge \binom{n}{2} - n$.
  
  \item
  $|R| \ge \binom{n}{2}$ (by adding the previous two inequalities).

\end{itemize}
For the presentation of $\mathbb{Z}^{n}$ described in the introduction, we have
equality in all three of these bounds. Hence $\text{def }\mathbb{Z}^{n} = n -
\binom{n}{2}$.

\cref{3-presentation} allows us to translate these bounds into lower bounds
for the number of vertices in a simplicial complex $X$ with fundamental group
$\pi_{1}(X) \cong \mathbb{Z}^{n}$. The first inequality above gives a bound of
$\Omega(n^{1/2})$, and the third gives a stronger bound of~$\Omega(n^{2/3})$. We
present the latter bound in more detail:

\begin{proposition}
  A simplicial complex $X$ with fundamental group $\pi_{1}(X) \cong
  \mathbb{Z}^{n}$ has at least $\Omega(n^{2/3})$ vertices.
\end{proposition}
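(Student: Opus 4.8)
The plan is to combine \cref{3-presentation} with the homological bound $|R| \ge \binom{n}{2}$ for any presentation of $\mathbb{Z}^n$. Suppose $X$ is a simplicial complex on $k$ vertices with $\pi_1(X) \cong \mathbb{Z}^n$. By \cref{3-presentation}, there is a (3-)presentation $\langle S | R\rangle \cong \mathbb{Z}^n$ with $|R| \le \binom{k}{3}$. On the other hand, as recorded just above, the Epstein deficiency inequality forces $|R| \ge \binom{n}{2}$ for \emph{any} presentation of $\mathbb{Z}^n$. Chaining these gives $\binom{k}{3} \ge \binom{n}{2}$.

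Then I would extract the asymptotics. Since $\binom{k}{3} = \Theta(k^3)$ and $\binom{n}{2} = \Theta(n^2)$, the inequality $\binom{k}{3} \ge \binom{n}{2}$ yields $k^3 = \Omega(n^2)$, i.e.\ $k = \Omega(n^{2/3})$. (If one wants a clean explicit constant, note $\binom{k}{3} \le k^3/6$ and $\binom{n}{2} \ge n^2/4$ for $n \ge 2$, so $k^3 \ge 3n^2/2$, giving $k \ge (3/2)^{1/3} n^{2/3}$; the precise constant is immaterial for the $\Omega$ statement.)

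There is really no obstacle here: both ingredients are already in hand — \cref{3-presentation} is proved, and the bound $|R| \ge \binom{n}{2}$ is the third bulleted consequence of Epstein's inequality stated in the text. The only mild subtlety is the implicit assumption that $X$ is connected so that $\pi_1(X)$ is well-defined and \cref{3-presentation} applies; but this is handled by the standing convention from the introduction (pass to the component containing the basepoint, which only decreases the vertex count). So the proof is a two-line deduction, and the real work of the section lies in the later strengthening to $\Omega(n^{3/4})$ via the $|S| = \Omega(n^{3/2})$ bound for 3-presentations.
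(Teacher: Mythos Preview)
Your proof is correct and follows essentially the same approach as the paper: apply \cref{3-presentation} to bound $|R| \le \binom{k}{3}$, invoke the deficiency bound $|R| \ge \binom{n}{2}$, and solve $\binom{k}{3} \ge \binom{n}{2}$ for $k = \Omega(n^{2/3})$. The paper's own proof is the same two-line chain, just phrased in terms of the function $f(n)$ denoting the minimum vertex count.
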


\begin{proof}
  Let $f(n)$ be the minimum number of vertices in a simplicial complex $X_{n}$
  with fundamental group $\pi_{1}(X_{n}) \cong \mathbb{Z}^{n}$. By
  \cref{3-presentation}, for each $n$ we obtain a 3-presentation $\langle
  S_{n} | R_{n}\rangle \cong \mathbb{Z}^{n}$ with $|R_{n}| \le \binom{f(n)}{3}$.
  But $|R_{n}| \ge \binom{n}{2}$, so $\binom{f(n)}{3} \ge \binom{n}{2}$, hence
  $f(n) = \Omega(n^{2/3})$.
\end{proof}

Up to now, we have considered bounds on the size of arbitrary presentations of
$\mathbb{Z}^{n}$. Now we turn to proving that for 3-presentations $\langle S |
R\rangle \cong \mathbb{Z}^{n}$, we have a stronger bound $|S| =
\Omega(n^{3/2})$. First we introduce a notion of dimension:

\begin{definition}
  Let $\phi \colon \langle S | R\rangle \cong \mathbb{Z}^{n}$ be a
  3-presentation. Then the \emph{dimension} of a subset $S' \subseteq S$,
  denoted $\dim S'$, is
  $$\dim(\text{span}\{\phi(g) : g \in S'\}),$$
  where we view each $\phi(g)$ as a vector in $\mathbb{R}^{n} \supseteq
  \mathbb{Z}^{n}$.

  For $r \in R$, let $r \leadsto \prod_{i}g_{i}^{a_{i}}$ by \cref{normal}. The
  \emph{dimension} of $r$ is the dimension of the subset $\{g_{i}\} \subseteq
  S$. (Note that the set $\{g_{i}\}$ is independent of the choice of normal
  form, so this definition is valid.)
\end{definition}

Note that for a relation $r$ with $r \leadsto \prod_{i = 1}^{k}g_{i}^{a_{i}}$,
we have $\sum_{i = 1}^{k}a_{i}\phi(g_{i}) = 0$, a linear dependence among 
the~$\phi(g_{i})$. It follows that $\dim r < k$. In particular, all relations of a
3-presentation $\langle S | R\rangle \cong \mathbb{Z}^{n}$ have dimension at
most two.

Our next goal is to show that for 3-presentations $\langle S | R\rangle \cong
\mathbb{Z}^{n}$ with $|S|$ minimal, all nonempty relations have dimension
exactly two. To do this, we use Tietze transformations
(\cite{tietze1908topologischen}; see also \cite{lyndon2001combinatorial}):

\begin{remark}[Tietze~\cite{tietze1908topologischen}]
\label{tietze}
  Consider a group presentation $\langle S | R\rangle \cong G$. Then:

  \begin{itemize}
    \item
    Let $r$ be a word in $S$ which is zero in $\langle S | R\rangle$. Then
    $\langle S | R \cup \{r\}\rangle \cong G$.

    \item
    Let $w$ be a word in $S$, and let $g$ be fresh. Then $\langle S \cup \{g\} |
    R \cup \{g^{-1}w\}\rangle \cong G$.

  \end{itemize}
  We refer to the passage from one presentation to another in either of these
  ways, in either direction, as a \emph{Tietze transformation}.
\end{remark}

We now establish several transformations of 3-presentations:

\begin{lemma}
\label{replace1}
  Let $\langle S | R\rangle \cong \mathbb{Z}^{n}$ be a 3-presentation, and
  suppose $g = 0$ in $\langle S | R\rangle$, where $g \in S$. Then we obtain a
  3-presentation $\langle S' | R'\rangle \cong \mathbb{Z}^{n}$ where:
  \begin{itemize}
    \item
    $S' = S \setminus \{g\}$.

    \item
    $R'$ is obtained from $R$ by removing $g$ wherever it appears in relations
    $r \in R$. (For example, $ghi \in R$ becomes $hi \in R'$.)

  \end{itemize}
\end{lemma}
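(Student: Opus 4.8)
The plan is to apply a Tietze transformation to eliminate the generator $g$, and then verify that the resulting presentation is still a 3-presentation of the required form. Since $g = 0$ in $\langle S \mid R\rangle$, by \cref{tietze} we may first add the relation $g$ (the word consisting of the single generator $g$) to obtain $\langle S \mid R \cup \{g\}\rangle \cong \mathbb{Z}^n$, and then use this relation to eliminate $g$: the reverse direction of the second Tietze move lets us pass from $\langle S \mid R \cup \{g\}\rangle$ to $\langle S \setminus \{g\} \mid R''\rangle$, where $R''$ is obtained from $R$ by substituting the empty word for every occurrence of $g$. Concretely, each relation $r \in R$ of the form $g^a h^b i^c$, $g^a h^b$, $g^a$, or one not involving $g$ becomes $h^b i^c$, $h^b$, $\langle\rangle$, or $r$ itself, respectively (and similarly for relations where $g$ occupies a different position). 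This is exactly the $R'$ described in the statement.

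The key steps, in order, are: (1) invoke \cref{tietze} to add the relation $g$; (2) invoke \cref{tietze} in reverse to delete the generator $g$ along with the relation $g$, substituting the empty word for $g$ throughout $R$; (3) observe that the isomorphism type $\mathbb{Z}^n$ is preserved at each step since Tietze transformations preserve the presented group; and (4) check that $\langle S' \mid R'\rangle$ is a 3-presentation, i.e., that each relation in $R'$ still has at most three generator-symbols. This last point is immediate: deleting occurrences of $g$ from a word of the form $g^a h^b i^c$ can only decrease the number of generator-symbols, so every relation in $R'$ is the empty word, a word $h^b$, a word $h^b i^c$, or a word of the form already allowed, hence still of one of the four permitted forms.

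I do not expect a serious obstacle here; the content is essentially bookkeeping built on top of \cref{tietze}. The one point that warrants a sentence of care is the justification that the substitution in step (2) is legitimate: after adding the relation $g$, the generator $g$ is genuinely redundant (it equals the empty word in the quotient), so the second Tietze move applies with $w = \langle\rangle$, and eliminating $g$ amounts to deleting it from every relation. I would also note explicitly that if $g$ appears with a nonzero exponent as part of a longer relation — say $g^a$ with $a \neq 0$ — then after substitution the contribution $g^a$ vanishes (since $g = 0$ forces $g^a = 0$), which is consistent with simply deleting the symbol; and that a relation of the form $g^a$ alone becomes the empty relation $\langle\rangle$, which is still allowed in a 3-presentation. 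Hence $\langle S' \mid R'\rangle \cong \mathbb{Z}^n$ is a 3-presentation of the claimed shape.
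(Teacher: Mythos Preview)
Your proposal is correct and follows essentially the same route as the paper: add the redundant relation $g$, use it to strip $g$ from every other relation, then remove the generator $g$ together with the relation $g$. The only difference is granularity: the paper explicitly records that ``removing $g$ from a relation'' is itself a pair of type-1 Tietze moves (add the modified relation, delete the old one), whereas you fold this substitution into a single invocation of the reverse type-2 move; since \cref{tietze} as stated requires $g$ to be fresh in the remaining relations before the reverse move applies, it would be cleaner to spell out that intermediate step, but the content is the same.
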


\begin{proof}
  We apply Tietze transformations:
  \begin{itemize}
    \item
    Add the redundant relation $g$ to $R$ to obtain~$R'$.

    \item
    Remove $g$ wherever it appears in relations $r \in R'$, except in the
    relation $g \in R'$. This is valid since $g = 0$ in $\langle S | R'\rangle$
    by the relation $g \in R'$. (Each such removal is two Tietze
    transformations, adding and removing a relation.)

    \item
    Remove the generator $g$, along with the relation $g$.

  \end{itemize}
  This gives the desired 3-presentation of $\mathbb{Z}^{n}$.
\end{proof}

\begin{lemma}
\label{replace2}
  Let $\langle S | R\rangle \cong \mathbb{Z}^{n}$ be a 3-presentation, and
  suppose $g^{a}h^{b} = 0$ in $\langle S | R\rangle$, where $g, h \in S$ are
  distinct, $a, b \ne 0$, and $a, b$ are relatively prime. Then we obtain a
  3-presentation $\langle S' | R'\rangle \cong \mathbb{Z}^{n}$ where:

  \begin{itemize}
    \item
    $S' = S \cup \{i\} \setminus \{g, h\}$, where $i$ is a fresh generator.

    \item
    $R'$ is obtained from $R$ by replacing $g$ with $i^{b}$ and $h$ with
    $i^{-a}$ wherever they appear in relations $r \in R$.

  \end{itemize}
\end{lemma}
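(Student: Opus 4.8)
The plan is to mimic the structure of the proof of \cref{replace1}, using Tietze transformations to introduce the fresh generator $i$, then substitute $g$ and $i$ in terms of each other, then eliminate $g$, and finally do the same for $h$. The key point that makes this work is that $\gcd(a, b) = 1$, so there exist integers $p, q$ with $pa + qb = 1$; this lets us solve for $g$ and $h$ in terms of $i$ once we have set $i = g^{q} h^{-p}$ (or a similar combination), matching the intended substitution $g \mapsto i^{b}$, $h \mapsto i^{-a}$.

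Concretely, first I would add the redundant relation $g^{a} h^{b}$ to $R$ (valid since $g^{a} h^{b} = 0$ in $\langle S \mid R \rangle$); call the result $R_0$. Next, introduce the fresh generator $i$ together with the relation $i^{-1} g^{q} h^{-p}$ (a Tietze transformation of the second type), where $pa + qb = 1$. In the resulting presentation, $i = g^{q} h^{-p}$, and combining this with $g^{a} h^{b} = 0$ one computes $i^{b} = g^{qb} h^{-pb} = g^{qb} \cdot g^{pa} = g^{qb + pa} = g$ and similarly $i^{-a} = h$; so both $g = i^{b}$ and $h = i^{-a}$ hold as consequences of the relations. I can therefore add these two relations $g^{-1} i^{b}$ and $h^{-1} i^{-a}$ (each redundant), then use them to substitute $g \mapsto i^{b}$ and $h \mapsto i^{-a}$ throughout all the other relations in $R_0$, and finally delete the generators $g, h$ along with all relations still mentioning them (the defining relation $i^{-1} g^{q} h^{-p}$, the substitution relations, and the redundant copy $g^{a} h^{b}$, which after substitution becomes $i^{ab} i^{-ab} = \langle\rangle$). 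The outcome is exactly $\langle S' \mid R' \rangle$ as described, and each step preserves the isomorphism type, so $\langle S' \mid R' \rangle \cong \mathbb{Z}^{n}$.

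The remaining obligation is to check that $\langle S' \mid R' \rangle$ is still a \emph{3-presentation} — i.e., that after substituting $g \mapsto i^{b}$ and $h \mapsto i^{-a}$, every relation still has at most three distinct generators and can be put in normal form via \cref{normal}. This is where a little care is needed: a relation $r \in R$ of the form $g^{c} h^{d} j^{e}$ becomes $i^{bc} i^{-ad} j^{e} = i^{bc - ad} j^{e}$ after collecting the two $i$-powers, which has at most two generators; a relation $g^{c} h^{d}$ becomes $i^{bc - ad}$; a relation $g^{c} j^{d}$ becomes $i^{bc} j^{d}$; and relations not mentioning $g$ or $h$ are unchanged. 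In every case the number of distinct generators does not increase beyond three (in fact it can only stay the same or drop), so $\langle S' \mid R' \rangle$ is indeed a 3-presentation. I expect this bookkeeping — verifying that the two $i$-powers always land adjacent after the conjugacies allowed by \cref{normal}, so that they genuinely collapse — to be the main (though still routine) obstacle; the relative primality hypothesis is what guarantees the substitution is invertible and hence that no information is lost.
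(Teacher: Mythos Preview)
Your approach is essentially identical to the paper's: the same B\'ezout-coefficient definition of $i$, the same redundant relations $g^{-1}i^{b}$ and $h^{-1}i^{-a}$, and the same substitute-then-delete sequence of Tietze transformations. The one point the paper makes explicit that you leave implicit is that the computation $i^{b} = (g^{q}h^{-p})^{b} = g^{qb}h^{-pb}$ and the subsequent simplification rely on the commutativity of $g$ and $h$, which holds because the presented group is $\mathbb{Z}^{n}$; your closing discussion about the result remaining a 3-presentation is correct but unnecessary, since the definition of a 3-presentation does not require the three generators in a relation to be distinct.
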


\begin{proof}
  There exist $c, d \in \mathbb{Z}$ with $ac + bd = 1$. We apply Tietze
  transformations:
  \begin{itemize}
    \item
    Add the relation $g^{a}h^{b}$, which is redundant by assumption.

    \item
    Add a generator $i$, along with the relation $i^{-1}g^{d}h^{-c}$, to obtain
    a new 3-presentation $$\phi' \colon \langle S' | R'\rangle \cong
    \mathbb{Z}^{n}.$$

    \item
    Add the relation $g^{-1}i^{b}$, which is redundant since
    $$i^{b} = g^{bd}h^{-bc} = g^{1 - ac}h^{-bc} = g(g^{a}h^{b})^{-c} = g$$
    in $\langle S' | R'\rangle$. (We use commutativity of $g, h$ in $\langle S'
    | R'\rangle$, which follows from commutativity of $\phi'(g), \phi'(h)$ in
    $\mathbb{Z}^{n}$.)

    \item
    Similarly, add the relation $h^{-1}i^{-a}$, which is redundant since
    $$i^{-a} = g^{-ad}h^{ac} = g^{-ad}h^{1 - bd} = h(g^{a}h^{b})^{-d} = h$$
    in $\langle S' | R'\rangle$.

    \item
    Replace $g$ with $i^{b}$ and $h$ with $i^{-a}$ wherever they appear in
    relations $r \in R'$ (i.e. in all relations other than the new relations
    $g^{-1}i^{b}, h^{-1}i^{-a}$).

    \item
    Remove the generators $g, h$, along with the relations $g^{-1}i^{b},
    h^{-1}i^{-a}$.

    \item
    Remove the relation $i^{-1}g^{d}h^{-c}$, which is now $i^{-1}i^{bd}i^{ac} =
    0$.

    \item
    Remove the relation $g^{a}h^{b}$, which is now $(i^{b})^{a}(i^{-a})^{b} =
    0$.

  \end{itemize}
  This gives the desired 3-presentation of $\mathbb{Z}^{n}$.
\end{proof}

\begin{lemma}
\label{minimal}
  Let $\phi \colon \langle S | R\rangle \cong \mathbb{Z}^{n}$ be a 3-presentation
  with $|S|$ minimal. Then for each nonempty $r \in R$, we have $r \leadsto
  g^{a}h^{b}i^{c}$, where $g, h, i \in S$ are distinct and $a, b, c\ne 0$, and
  $\dim r = 2$.
\end{lemma}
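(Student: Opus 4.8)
The plan is to argue by contradiction: suppose $\langle S | R\rangle \cong \mathbb{Z}^n$ is a 3-presentation with $|S|$ minimal, yet some nonempty $r \in R$ has $\dim r \le 1$. Since every nonempty relation has dimension at most two (as noted after the definition of dimension), the two cases to rule out are $\dim r = 0$ and $\dim r = 1$. In each case I want to produce a 3-presentation of $\mathbb{Z}^n$ with strictly fewer generators, contradicting minimality. \cref{replace1} and \cref{replace2} are exactly the tools for this: they eliminate a generator (or a pair, replaced by one fresh generator) at the cost of rewriting the other relations, and crucially they preserve the property of being a 3-presentation. So the heart of the argument is to check that whenever $\dim r \le 1$, the hypotheses of one of these lemmas are met, i.e.\ that some single generator, or some short product of two generators, is trivial in $\langle S | R\rangle \cong \mathbb{Z}^n$.

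First I would handle $\dim r = 0$: write $r \leadsto \prod_i g_i^{a_i}$ in normal form via \cref{normal}. Dimension zero means all $\phi(g_i)$ are $0$ in $\mathbb{Z}^n$; but $\phi$ is an isomorphism, so each $g_i = 0$ in $\langle S | R\rangle$. If there is at least one $g_i$ (which there is, since $r$ is nonempty and in normal form has $a_i \ne 0$), apply \cref{replace1} to that $g_i$ to drop a generator — contradiction. Next, $\dim r = 1$: now the $\phi(g_i)$ all lie on a common line through the origin in $\mathbb{R}^n$, and they are not all zero. The primitive lattice vector $p$ on that line satisfies $\phi(g_i) = m_i p$ for integers $m_i$, and since $r$ uses at most three generators with at most three distinct $m_i$, among them I can find two generators $g, h$ with $\phi(g) = m_g p$, $\phi(h) = m_h p$ and a small integer combination that kills them. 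Concretely, if some $m_i = 0$ then that $g_i = 0$ and \cref{replace1} applies; otherwise pick two of the generators $g, h$ appearing in $r$ and set $a = m_h/\gcd(m_g,m_h)$, $b = -m_g/\gcd(m_g,m_h)$, so that $a\phi(g) + b\phi(h) = 0$, hence $g^a h^b = 0$ in $\langle S|R\rangle$ with $a, b$ coprime and nonzero, and \cref{replace2} applies — again dropping the generator count by one, contradiction. (A subtlety: $r$ might use only one generator, i.e.\ $r \leadsto g^a$; but then $\dim r = 1$ forces $\phi(g) \ne 0$, and $g^a = 0$ with $a \ne 0$ is impossible in torsion-free $\mathbb{Z}^n$, so this sub-case cannot occur.)

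Having excluded $\dim r \in \{0,1\}$, every nonempty $r \in R$ has $\dim r = 2$. It remains to upgrade the normal form from "at most three generators" to "exactly three distinct generators with all exponents nonzero." The bound $\dim r < k$ (where $k$ is the number of generators in the normal form) combined with $\dim r = 2$ forces $k \ge 3$; since a 3-presentation relation in normal form has $k \le 3$, we get $k = 3$ exactly, and normal form already guarantees the three generators are distinct and the three exponents are nonzero. So $r \leadsto g^a h^b i^c$ with $g,h,i$ distinct and $a,b,c \ne 0$, as claimed.

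I expect the main obstacle to be the bookkeeping in the $\dim r = 1$ case: one must be careful that the generator(s) being eliminated actually appear in the relation $r$ whose dimension is being exploited, that the gcd-normalization produces coprime exponents so \cref{replace2} applies verbatim, and that after the Tietze rewriting the resulting presentation is genuinely a 3-presentation of $\mathbb{Z}^n$ on $|S|-1$ generators (which is handed to us by \cref{replace1,replace2}). The only real content beyond invoking those lemmas is the elementary linear algebra over $\mathbb{Z}$ identifying, from a rank-$\le 1$ dependence among at most three lattice vectors, an explicit short trivial word — which is routine.
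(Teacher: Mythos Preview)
Your proposal is correct and follows essentially the same approach as the paper: both use \cref{replace1} and \cref{replace2} to contradict minimality of $|S|$, with the only difference being that the paper organizes the case analysis by the length $k$ of the normal form (ruling out $k=1$ via \cref{replace1} and $k=2$ via \cref{replace2}, then reading off $\dim r = 2$), whereas you organize it by $\dim r$ and back out $k=3$ at the end. The logical content is identical.
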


\begin{proof}
  Let $r \leadsto \prod_{i = 1}^{k}g_{i}^{a_{i}}$ by \cref{normal}. By
  \cref{replace1}, no $g_{i}$ are zero in $\langle S | R\rangle$, which implies
  $k \ne 1$. By \cref{replace2}, no distinct $g_{i}, g_{j}$ have $\phi(g_{i})$,
  $\phi(g_{j})$ in a common one-dimensional subspace of $\mathbb{R}^{n}$, which
  implies $k \ne 2$.

  Hence $k = 3$, so $r \leadsto g^{a}h^{b}i^{c}$ for $g, h, i \in S$ distinct
  and $a, b, c \ne 0$. Then the considerations above imply $\dim\{g, h\} = 2$,
  so $\dim r \ge 2$. Since $\phi(g), \phi(h), \phi(i)$ are dependent in
  $\mathbb{R}^{n}$, we have $\dim r = 2$.
\end{proof}

Our next transformation requires the notion of a \emph{sparse} set of relations:

\begin{definition}
  Let $\langle S | R\rangle \cong \mathbb{Z}^{n}$ be a 3-presentation, and let
  $S' \subseteq S$, $R' \subseteq R$. Then define the set $R'[S'] \subseteq R'$
  as
  $$R'[S'] = \{r \in R' : r \leadsto w,\,
    \text{and $w$ uses only generators in $S'$}\}.$$
\end{definition}

\begin{definition}
  Let $\phi \colon \langle S | R\rangle \cong \mathbb{Z}^{n}$ be a
  3-presentation, and let $R' \subseteq R$.
  \begin{itemize}
    \item
    For $S' \subseteq S$ with $\dim S' = 2$, $R'$ is \emph{sparse on $S'$} if
    $|R'[S']| \le |S'| - 1$.

    \item
    $R'$ is \emph{sparse} if $R'$ is sparse on all $S' \subseteq S$ with $\dim
    S' = 2$.

    \item
    A set $S' \subseteq S$ is \emph{critical for $R'$} if $\dim S' = 2$ and
    $|R'[S']| = |S'| - 1$.
  \end{itemize}
\end{definition}

\begin{lemma}
\label{critical-union}
  Let $\phi \colon \langle S | R\rangle \cong \mathbb{Z}^{n}$ be a
  3-presentation with $|S|$ minimal, so that \cref{minimal} applies. Suppose $R'
  \subseteq R$ is sparse, and $S', S'' \subseteq S$ are critical for $R'$. If
  $R[S'] \cap R[S''] \ne \emptyset$, then $S' \cup S''$ is also critical for
  $R'$.
\end{lemma}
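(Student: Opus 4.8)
The plan is to argue via dimensions and an inclusion–exclusion count on the relation sets. Set $S_\cap = S' \cap S''$ and $S_\cup = S' \cup S''$. Since $R[S']\cap R[S'']\ne\emptyset$, there is a relation $r$ with $r\leadsto w$ using only generators in $S'$ and also (via possibly a different normal form, but the underlying generator set is the same by \cref{normal}) only generators in $S''$; hence the generators of $r$ lie in $S_\cap$. By \cref{minimal}, $\dim r = 2$, so $\dim S_\cap \ge 2$. On the other hand $S_\cap \subseteq S'$ and $\dim S' = 2$, so $\dim S_\cap = 2$, and likewise $\dim S_\cup = 2$ since $\operatorname{span}\{\phi(g):g\in S_\cup\}$ is spanned by the $2$-dimensional $\operatorname{span}\{\phi(g):g\in S'\}$ together with more vectors, but $\phi(S'')$ already lies in a $2$-dimensional space meeting $\operatorname{span}\phi(S')$ in the $2$-dimensional $\operatorname{span}\phi(S_\cap)$, forcing the two $2$-planes to coincide. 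Thus $S_\cap$, $S_\cup$, $S'$, $S''$ all have dimension $2$, so "sparse on" and "critical for" make sense for all of them.

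Next I would establish the key combinatorial identity
\[
  R[S'] \cup R[S''] = R[S_\cup], \qquad R[S'] \cap R[S''] = R[S_\cap].
\]
The inclusions $R[S']\cup R[S'']\subseteq R[S_\cup]$ and $R[S_\cap]\subseteq R[S']\cap R[S'']$ are immediate from the definition of $R'[\,\cdot\,]$ (more generators allowed can only enlarge the set; the generator set of a relation is well-defined by \cref{normal}). For the reverse inclusions: if $r\in R[S_\cup]$ then its generator set $T$ satisfies $T\subseteq S'\cup S''$; since $|T|\le 3$ and — crucially — $\dim r = 2$ by \cref{minimal}, I claim $T$ cannot have one generator in $S'\setminus S''$ and another in $S''\setminus S'$ unless \dots here one must be careful, because $T$ could a priori straddle. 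The clean way around this is not to try to force $T\subseteq S'$ or $T\subseteq S''$, but rather to run the inclusion–exclusion purely on cardinalities: by submodularity of dimension (both $S_\cap$ and $S_\cup$ have dimension $2$, same as $S'$ and $S''$) and sparseness of $R'$,
\[
  |R'[S_\cup]| \le |S_\cup| - 1, \qquad |R'[S_\cap]| \le |S_\cap| - 1,
\]
while criticality of $S'$, $S''$ gives $|R'[S']| = |S'|-1$ and $|R'[S'']| = |S''|-1$. Combining the inclusion $R'[S']\cup R'[S'']\subseteq R'[S_\cup]$ and $R'[S_\cap]\subseteq R'[S']\cap R'[S'']$ with $|S'|+|S''| = |S_\cup| + |S_\cap|$ yields
\[
  |R'[S_\cup]| \ge |R'[S']| + |R'[S'']| - |R'[S']\cap R'[S'']|
             \ge (|S'|-1) + (|S''|-1) - (|S_\cap|-1) = |S_\cup| - 1,
\]
so $|R'[S_\cup]| = |S_\cup| - 1$, i.e. $S_\cup$ is critical for $R'$. (The hypothesis $R[S']\cap R[S'']\ne\emptyset$ is what guarantees $S_\cap$ itself has dimension $2$, which is what licenses applying sparseness to $S_\cap$; without it the term $|S_\cap|-1$ could not be invoked.)

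The main obstacle, and the place to be most careful, is the dimension bookkeeping that ensures all four sets genuinely have dimension $2$ so that sparseness applies to each — in particular the step deducing $\dim(S'\cap S'')=2$ from the existence of a common relation of dimension $2$, and the step $\dim(S'\cup S'')=2$, which uses that two $2$-dimensional subspaces of $\mathbb R^n$ sharing a $2$-dimensional subspace are equal. Once those are in place, the argument is a short inclusion–exclusion as above. I would also double-check that $R'[S']$ depends only on the generator set of each relation and not on a chosen normal form — this is exactly the content of the parenthetical remark in the definition of dimension of a relation, and follows from \cref{normal}.
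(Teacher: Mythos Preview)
Your proof is correct and follows the paper's essentially line for line: establish $\dim(S'\cap S'')=\dim(S'\cup S'')=2$ from the shared relation, then run inclusion--exclusion using sparseness on $S'\cap S''$ and criticality of $S',S''$ to get $|R'[S'\cup S'']|\ge |S'\cup S''|-1$. One small slip: for your displayed chain you need $R'[S']\cap R'[S'']\subseteq R'[S_\cap]$ (so that sparseness on $S_\cap$ bounds $|R'[S']\cap R'[S'']|$ from \emph{above}), not the opposite inclusion you wrote --- but that direction is just as immediate (a relation whose generators all lie in $S'$ and all lie in $S''$ has them in $S'\cap S''$), and in fact equality holds.
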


\begin{proof}
  Let $r \in R[S'] \cap R[S'']$, and write $r \leadsto g^{a}h^{b}i^{c}$ by
  \cref{minimal}. Then the set $\{\phi(g), \phi(h), \phi(i)\}$ spans a
  2-dimensional subspace $U \subseteq \mathbb{R}^{n}$. Since $\dim S' = \dim S''
  = 2$, we have $\text{span}(\phi(S')) = \text{span}(\phi(S'')) = U$. Then
  $$U \subseteq \text{span}(\phi(S' \cap S'')) \subseteq \text{span}(\phi(S')) =
  U,$$
  so $\text{span}(\phi(S' \cap S'')) = U$, and also $\text{span}(\phi(S' \cup
  S'')) = U + U = U$. In particular, $\dim(S' \cap S'') = \dim(S' \cup S'') =
  2$. Therefore, we have
  \begin{align*}
    |R'[S' \cup S'']|
      &= |R'[S']| + |R'[S'']| - |R'[S' \cap S'']|\\
      &\ge (|S'| - 1) + (|S''| - 1) - (|S' \cap S''| - 1)\\
      &\ge |S' \cup S''| - 1.
  \end{align*}
  Hence $S' \cup S''$ is critical for $R'$.
\end{proof}

\begin{corollary}
\label{critical}
  Let $\phi \colon \langle S | R \rangle \cong \mathbb{Z}^{n}$ be a
  3-presentation with $|S|$ minimal, so that \cref{minimal} applies. Suppose $R'
  \subseteq R$ is sparse. Then there exists a collection $\mathcal{C}$ of
  certain critical sets $S' \subseteq S$ for $R'$, such that:
  \begin{enumerate}[(1)]
    \item
    If $S'' \subseteq S$ is critical for $R'$, then there exists $S' \in
    \mathcal{C}$ with $S'' \subseteq S'$.

    \item
    If $S', S'' \in \mathcal{C}$, then $R[S'] \cap R[S''] = \emptyset$.

  \end{enumerate}
\end{corollary}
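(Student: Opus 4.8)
The plan is to build the collection $\mathcal{C}$ greedily by taking maximal critical sets, using \cref{critical-union} as the key merging tool. First I would observe that the relation ``$S' \sim S''$ if $R[S'] \cap R[S''] \neq \emptyset$'' on critical sets is, by \cref{critical-union}, compatible with taking unions: whenever two critical sets share a relation, their union is again critical (and still shares that relation with both). So I would define, for each critical set $S''$, its \emph{closure} $\overline{S''}$ to be the union of all critical sets $S'$ that are connected to $S''$ through a chain of critical sets, consecutive ones sharing a relation. Since $S$ is finite, this union is finite, and I claim it is itself critical: one shows by induction along such a chain, repeatedly applying \cref{critical-union}, that any finite union of critical sets forming a ``connected'' chain (consecutive members sharing a relation in $R[\cdot]$) is critical. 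The base case is a single set; the inductive step merges one more critical set that shares a relation with the union so far, and \cref{critical-union} applies because $R[\text{union so far}] \cap R[\text{new set}] \supseteq R[\text{member it shares with}] \cap R[\text{new set}] \neq \emptyset$.

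Then I would let $\mathcal{C}$ be the set of all these closures $\overline{S''}$ as $S''$ ranges over critical sets. Property~(1) is immediate: any critical $S''$ satisfies $S'' \subseteq \overline{S''} \in \mathcal{C}$. For property~(2), suppose $S', S'' \in \mathcal{C}$ with $R[S'] \cap R[S''] \neq \emptyset$. Since $S'$ and $S''$ are themselves critical (being finite connected unions of critical sets, as just argued), the fact that they share a relation means they lie in the same chain-connected class, so $\overline{S'} = \overline{S''}$. But $S'$ is a closure, so $S' = \overline{S'}$, and likewise $S'' = \overline{S''}$; hence $S' = S''$. Equivalently: distinct members of $\mathcal{C}$ are distinct closure classes, and two critical sets from different classes cannot share a relation by the very definition of the classes.

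I expect the main obstacle to be the inductive argument that an arbitrary finite chain-connected union of critical sets is critical, since \cref{critical-union} as stated only merges two sets at a time and requires $\dim = 2$ throughout. The care needed is: when I have already merged $T = S_1 \cup \cdots \cup S_k$ (critical, with all $\phi(S_i)$ spanning the same plane $U$) and want to adjoin $S_{k+1}$ which shares a relation with some $S_j \subseteq T$, I need $R[T] \cap R[S_{k+1}] \neq \emptyset$, which holds because this intersection contains $R[S_j] \cap R[S_{k+1}]$, nonempty by the chain condition and the monotonicity $R[S_j] \subseteq R[T]$. Then \cref{critical-union} gives that $T \cup S_{k+1}$ is critical. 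One should also note, as in the proof of \cref{critical-union}, that a shared relation forces all the spanning planes to coincide, so ``$\dim = 2$'' is automatically maintained and the hypotheses of \cref{critical-union} are genuinely met at each step. Once that lemma-chaining is in hand, everything else is bookkeeping about the equivalence classes.
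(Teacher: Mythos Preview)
Your approach is correct and essentially the same as the paper's. The paper starts from $\mathcal{C}=\{\text{all critical sets}\}$ and repeatedly merges any two members $S',S''$ with $R[S']\cap R[S'']\neq\emptyset$ into $S'\cup S''$ via \cref{critical-union}, noting that each step preserves~(1) and strictly decreases~$|\mathcal{C}|$, so the process terminates with~(2) holding. Your closure construction is just the endpoint of this process described in one shot: the equivalence classes under chain-connectedness are exactly what remains after all merges, and your inductive chaining of \cref{critical-union} is the same argument that the iterated merging stays within the class of critical sets.
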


\begin{proof}
  First take the collection $\mathcal{C} = \{S' \subseteq S : S' \text{ critical for $R'$}\}$;
  then (1) holds. If $S', S'' \in \mathcal{C}$ with ${R[S'] \cap R[S''] \ne \emptyset}$, 
  then by~\cref{critical-union}, $S' \cup S''$ is critical for~$R'$.
  Then consider removing $S', S''$ from $\mathcal{C}$, and adding $S' \cup S''$
  if it is not present.

  While (2) fails, apply the step above repeatedly. Each step preserves (1) and
  reduces $|\mathcal{C}|$, so this process terminates with $\mathcal{C}$ such
  that (1), (2) both hold.
\end{proof}

\begin{lemma}
\label{replace-sparse}
  Let $\phi \colon \langle S | R\rangle \cong \mathbb{Z}^{n}$ be a
  3-presentation with $|S|$ minimal, so that \cref{minimal} applies. Partition
  $R$ as $R = R_{s} \sqcup R_{e} \sqcup R_{o}$ (mnemonic: ``sparse,'' ``extra,''
  ``other''), such that $R_{s}$ is sparse, and for each $r \in R_{e}$ with $r
  \leadsto g^{a}h^{b}i^{c}$, we have $\{g, h, i\} \subseteq S'$ for some
  critical $S' \subseteq S$ for $R_{s}$. Then we obtain a 3-presentation
  $\langle S' | R'\rangle \cong \mathbb{Z}^{n}$ where:
  \begin{itemize}
    \item
    $S'$ includes all generators in $S$.

    \item
    $R'$ includes all relations in $R_{o}$.

    \item
    $|R'| - |S'| = |R_{s}| + |R_{o}| - |S|$.

  \end{itemize}
\end{lemma}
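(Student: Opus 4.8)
The plan is to pass from $\langle S\,|\,R\rangle$ to $\langle S'\,|\,R'\rangle$ by a sequence of Tietze transformations (\cref{tietze}): for a suitable family of critical sets we adjoin two fresh generators per set and use the rank-two structure of the set to express its old generators in terms of the fresh ones. Apply \cref{critical} with $R' = R_s$ to obtain a collection $\mathcal{C}$ of critical sets for $R_s$ such that every critical set is contained in some member of $\mathcal{C}$ and the sets $R[S^*]$ ($S^* \in \mathcal{C}$) are pairwise disjoint; then also the sets $R_s[S^*]$ ($S^* \in \mathcal{C}$) are pairwise disjoint. By \cref{minimal} every nonempty relation of $R$ has the form $g^a h^b i^c$ with $g,h,i$ distinct (an empty relation is trivially redundant and may be ignored below). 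Fix $S^* \in \mathcal{C}$; then $\dim S^* = 2$ (so $|S^*| \ge 2$) and $|R_s[S^*]| = |S^*| - 1$, and the subgroup $L_{S^*} := \langle \phi(g) : g \in S^*\rangle \le \mathbb{Z}^n$ is free abelian of rank $\dim S^* = 2$. Fix a basis $b_1, b_2$ of $L_{S^*}$, so that $\phi(g) = x_g b_1 + y_g b_2$ with $x_g, y_g \in \mathbb{Z}$ unique, for each $g \in S^*$.

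Now perform the following transformations. First, for each $S^* \in \mathcal{C}$ adjoin fresh generators $p_{S^*}, q_{S^*}$ with defining relations $p_{S^*}^{-1} w_1$ and $q_{S^*}^{-1} w_2$, where $w_j$ is a word in the generators $S^*$ with $\phi$-image $b_j$ (such words exist since $L_{S^*}$ is the $\phi$-image of the free group on $S^*$). Second, adjoin the redundant relations $[p_{S^*}, q_{S^*}]$ ($S^* \in \mathcal{C}$), redundant because the group is abelian, and the redundant relations $\sigma_g := g^{-1} p_{S^*}^{x_g} q_{S^*}^{y_g}$ for all $S^* \in \mathcal{C}$ and $g \in S^*$, redundant because their $\phi$-image is $-\phi(g) + x_g b_1 + y_g b_2 = 0$; each $\sigma_g$ has a form allowed in a 3-presentation. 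The key point is that, modulo $\{\sigma_g : g \in S^*\} \cup \{[p_{S^*}, q_{S^*}]\}$, any relation whose generators all lie in $S^* \cup \{p_{S^*}, q_{S^*}\}$ rewrites to a word in $p_{S^*}, q_{S^*}$ whose two exponent sums are the $b_1$- and $b_2$-coordinates of its $\phi$-image; so if that image is $0$, the relation is a consequence of $\{\sigma_g : g \in S^*\} \cup \{[p_{S^*}, q_{S^*}]\}$. Third, using this, remove (in any order): the defining relations $p_{S^*}^{-1} w_1, q_{S^*}^{-1} w_2$; all but one relation of each $R_s[S^*]$ (that is, $|S^*| - 2$ of them); and every relation of $R_e$, using that, by hypothesis and \cref{critical}, the generators of each $r \in R_e$ lie in some $S^* \in \mathcal{C}$. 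Each of these removals is legitimate since the relevant $\sigma_g$ and $[p_{S^*}, q_{S^*}]$ are still present. Finally remove the relations $[p_{S^*}, q_{S^*}]$, now redundant as the group is already $\cong \mathbb{Z}^n$.

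The resulting 3-presentation $\langle S'\,|\,R'\rangle \cong \mathbb{Z}^n$ has $S' = S \cup \{p_{S^*}, q_{S^*} : S^* \in \mathcal{C}\}$, so $S' \supseteq S$ and $|S'| = |S| + 2|\mathcal{C}|$; and $R'$ is the union of $R_o$, of $R_s$ with $|S^*| - 2$ relations deleted from each $R_s[S^*]$, and of the $\sum_{S^* \in \mathcal{C}} |S^*|$ relations $\sigma_g$. Hence $R' \supseteq R_o$, and, since the $R_s[S^*]$ are pairwise disjoint,
\[
  |R'| = |R_o| + \Bigl(|R_s| - \sum_{S^* \in \mathcal{C}}(|S^*| - 2)\Bigr) + \sum_{S^* \in \mathcal{C}} |S^*| = |R_o| + |R_s| + 2|\mathcal{C}|,
\]
so $|R'| - |S'| = |R_s| + |R_o| - |S|$, as required.

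The main difficulty is the bookkeeping. One must order the relation removals so that the auxiliary relations used to justify each one are still present, and — this is the point that makes the deficiency drop by exactly $|R_e|$ rather than $|R_e| + |\mathcal{C}|$ — one must retain one relation of each $R_s[S^*]$ while keeping all $\sum_{S^* \in \mathcal{C}} |S^*|$ of the $\sigma_g$, rather than one per generator; the latter is needed because distinct members of $\mathcal{C}$ may share a generator (at most one, by sparsity).
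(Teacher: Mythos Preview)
Your construction has a genuine gap in the final step: the removal of the commutator relations $[p_{S^*},q_{S^*}]$ is not justified, and in fact is false in general. Your sentence ``now redundant as the group is already $\cong\mathbb{Z}^n$'' is circular---you are trying to conclude that the group without the commutators is $\mathbb{Z}^n$, so you cannot assume it. Removing a relation is a Tietze move only when that relation is a \emph{consequence of the remaining ones}, and your remaining relations need not force $p_{S^*}$ and $q_{S^*}$ to commute.

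Here is a concrete counterexample satisfying all hypotheses. Take $n=2$, $S=\{g,h,i\}$, and $R=\{ghi^{-1},\,hgi^{-1}\}$; one checks $\langle S\mid R\rangle\cong\mathbb{Z}^2$, that $|S|=3$ is minimal for a 3-presentation of $\mathbb{Z}^2$, and that $R_s=R$, $R_e=R_o=\emptyset$ is an admissible partition with $\mathcal{C}=\{S\}$. With $\phi(g)=b_1$, $\phi(h)=b_2$, $\phi(i)=b_1+b_2$ you get $\sigma_g=g^{-1}p$, $\sigma_h=h^{-1}q$, $\sigma_i=i^{-1}pq$. If you retain the relation $ghi^{-1}$ (you remove $|S^*|-2=1$ relation and do not specify which), then after dropping $[p,q]$ your presentation is
\[
\langle g,h,i,p,q \mid ghi^{-1},\,g^{-1}p,\,h^{-1}q,\,i^{-1}pq\rangle
\;\cong\;\langle p,q\mid pq(pq)^{-1}\rangle\;\cong\; F_2,
\]
not $\mathbb{Z}^2$. (Retaining $hgi^{-1}$ instead happens to work here, since it rewrites to $[q,p]$, but you neither make such a choice nor argue that a suitable one always exists---and in general the retained relation could rewrite to, say, a proper power of a commutator.)

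The paper's proof avoids this by encoding the commutator in 3-presentation form: for each $S^*\in\mathcal{C}$ it introduces a third fresh generator $h_*$ together with the two relations $h_*^{-1}h_1h_2$ and $h_*^{-1}h_2h_1$, which jointly imply $[h_1,h_2]$ and are \emph{kept} in the final presentation. Correspondingly, the paper removes \emph{all} of $R[S^*]$ (not all but one). This costs one more generator and one more relation per $S^*$ than your scheme, so the deficiency bookkeeping is identical, but now the commutativity of the two basis generators is an honest consequence of the surviving relations. Adding this extra generator-and-relation pair (or otherwise proving that $[p_{S^*},q_{S^*}]$ follows from what you keep) is exactly the missing idea.
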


\begin{proof}
  Obtain a collection $\mathcal{C}$ of critical sets for $R_{s}$ via
  \cref{critical}. Then for each $S'' \in \mathcal{C}$, consider the
  integer span of $\phi(S'')$ in $\mathbb{Z}^{n}$, that is, the set
  $$\Lambda = \left\{\sum_{i = 1}^{k}a_{i}\phi(g_{i})
    : k \in \mathbb{N},\, a_{i} \in \mathbb{Z},\, g_{i} \in S''\right\}.$$
  We have $\Lambda \cong \mathbb{Z}^{2}$ (see~\cite[Thm~1.12.3]{duistermaat2000lie}), 
  so $\Lambda$ has a basis $\{x_{1}, x_{2}\}$. We apply Tietze
  transformations (for each $S'' \in \mathcal{C}$) to $\langle S | R\rangle$.
  (We will introduce some relations with more than three generators, but we
  remove these later.)
  \begin{itemize}
    \item
    For each $j \in [2]$, write $x_{j} = \sum_{i}a_{i}\phi(g_{i})$ for $a_{i}
    \in \mathbb{Z}$, $g_{i} \in S''$. Then add a generator $h_{j}$, along with
    the relation $h_{j}^{-1}\prod_{i}g_{i}^{a_{i}}$, to obtain $\phi' \colon
    \langle S' | R'\rangle \cong \mathbb{Z}^{n}$. Note that
    $$\phi'(h_{j})
      = \phi\left(\prod_{i}g_{i}^{a_{i}}\right)
      = \sum_{i}a_{i}\phi(g_{i})
      = x_{j}.$$

    \item
    For each $g \in S''$, write $\phi(g) = \sum_{j}b_{j}x_{j}$ for $b_{j} \in
    \mathbb{Z}$. Then add the relation $g^{-1}\prod_{j}h_{j}^{b_{j}}$, which is
    redundant since
    $$\phi'\left(g^{-1}\prod_{j}h_{j}^{b_{j}}\right)
      = -\phi(g) + \sum_{j}b_{j}\phi'(h_{j})
      = 0,$$
    where we use $\phi'(h_{j}) = x_{j}$ in the last step.

    \item
    Add a generator $h_{*}$, along with the relation $h_{*}^{-1}h_{1}h_{2}$.

    \item
    Add the relation $h_{*}^{-1}h_{2}h_{1}$, which is redundant since
    $\mathbb{Z}^{n}$ (and hence our current $\langle S' | R'\rangle \cong
    \mathbb{Z}^{n}$) is abelian. Note that the relation
    $h_{1}h_{2}h_{1}^{-1}h_{2}^{-1}$ is now implied by the relations
    $h_{*}^{-1}h_{1}h_{2}$ and~$h_{*}^{-1}h_{2}h_{1}$.

    \item
    Remove all relations $r \in R[S'']$, which are now redundant. To see this,
    first rewrite $r$ in terms of only the $h_{j}$, via the relations
    $g^{-1}\prod_{j}h_{j}^{b_{j}}$. Then rewrite $r$ as $\prod_{j}h_{j}^{b_{j}}$
    for $b_{j} \in \mathbb{Z}$, via the relations
    $h_{i}h_{j}h_{i}^{-1}h_{j}^{-1}$. Applying $\phi'$, we obtain
    $\sum_{j}b_{j}x_{j} = 0$, so $b_{j} = 0$ by the lattice structure of
    $\Lambda \cong \mathbb{Z}^{2}$. Hence we have rewritten $r$ as the empty
    word, so $r$ is redundant.

    \item
    Remove the relations $h_{j}^{-1}\prod_{i}g_{i}^{b_{i}}$ added in the first
    step, which are now redundant, since we may rewrite any such relation in
    terms of only the $h_{j}$, and then apply the previous argument.

  \end{itemize}
  After applying these steps for each $S'' \in \mathcal{C}$, we call the
  resulting 3-presentation $\langle S' | R'\rangle$. For each $S'' \in
  \mathcal{C}$, we have added three generators and a net of $|S''| - |R[S'']| +
  2$ relations. By definition of~$\mathcal{C}$, the sets $R[S'']$ are disjoint
  for distinct $S'' \in \mathcal{C}$. Hence we have
  \begin{align*}
    |R'| - |S'|
      &= |R| - |S| + \sum_{S'' \in \mathcal{C}}(|S''| - |R[S'']| - 1)\\
      &= |R| - |S| + \sum_{S'' \in \mathcal{C}}(|S''| - |R_{s}[S'']| - 1)
        - \sum_{S'' \in \mathcal{C}}|R_{e}[S'']|\\
      &= |R| - |S| - |R_{e}|\\
      &= |R_{s}| + |R_{o}| - |S|.
  \end{align*}
  This completes the proof.
\end{proof}

We need one more transformation of presentations:

\begin{lemma}
\label{replace-subspace}
  Let $\phi \colon \langle S | R\rangle \cong \mathbb{Z}^{n}$ be a
  3-presentation, let $S' \subseteq S$, and let $d = \dim S'$. Then we obtain a
  presentation $\langle S'' | R''\rangle \cong \mathbb{Z}^{n - d}$ where:
  \begin{itemize}
    \item
    $S'' = S \setminus S'$.

    \item
    $R''$ is obtained from $R$ by adding $d$ relations to form $R'$, then
    removing each $g \in S'$ wherever it appears in relations $r \in R'$.

  \end{itemize}
\end{lemma}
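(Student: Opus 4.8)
The plan is to realize the target group as $\mathbb{Z}^n/\bar\Lambda$, where $\bar\Lambda \le \mathbb{Z}^n$ is the \emph{saturation} of the sublattice spanned by $\phi(S')$; since $\bar\Lambda$ is a rank-$d$ direct summand of $\mathbb{Z}^n$, the quotient $\mathbb{Z}^n/\bar\Lambda$ is isomorphic to $\mathbb{Z}^{n-d}$. The $d$ relations added to $R$ will be words in $S$ representing a $\mathbb{Z}$-basis of $\bar\Lambda$; once these are added, every generator in $S'$ becomes trivial and can be eliminated by Tietze transformations exactly as in the proof of \cref{replace1}.

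In detail, set $\Lambda = \langle \phi(g) : g \in S'\rangle \le \mathbb{Z}^n$ and let $\bar\Lambda = \{x \in \mathbb{Z}^n : mx \in \Lambda \text{ for some integer } m \ge 1\}$ be its saturation. Since $\bar\Lambda/\Lambda$ is finite, $\operatorname{rank}\bar\Lambda = \operatorname{rank}\Lambda = \dim S' = d$; since moreover $\mathbb{Z}^n/\bar\Lambda$ is torsion-free and finitely generated, we get $\mathbb{Z}^n/\bar\Lambda \cong \mathbb{Z}^{n-d}$ and $\bar\Lambda \cong \mathbb{Z}^d$ (see e.g.\ \cite{duistermaat2000lie}). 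Fix a basis $y_1, \dots, y_d$ of $\bar\Lambda$. Since $\phi$ is an isomorphism onto $\mathbb{Z}^n$, for each $j \in [d]$ I would choose a word $r_j$ in $S$ whose image $\bar r_j$ in $\langle S | R\rangle$ satisfies $\phi(\bar r_j) = y_j$, and set $R' = R \cup \{r_1, \dots, r_d\}$. The presentation $\langle S | R'\rangle$ is the quotient of $\langle S | R\rangle$ by the normal closure of $\bar r_1, \dots, \bar r_d$; transporting this quotient along $\phi$, and using that a normal closure coincides with the generated subgroup in the abelian group $\mathbb{Z}^n$, we see $\langle S | R'\rangle \cong \mathbb{Z}^n / \langle y_1, \dots, y_d\rangle = \mathbb{Z}^n/\bar\Lambda \cong \mathbb{Z}^{n-d}$.

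It remains to eliminate the generators of $S'$. For every $g \in S'$ we have $\phi(g) \in \Lambda \subseteq \bar\Lambda$, so $g$ maps to the identity in $\langle S | R'\rangle$. I would then process the generators of $S'$ one at a time: for each $g \in S'$, add the redundant relation $g$; use it to delete every occurrence of $g$ from the remaining relations (each such deletion being an add-then-remove pair of Tietze transformations, valid because $g = 1$ in the current presentation); and finally delete the generator $g$ along with the relation $g$. This is precisely the maneuver in the proof of \cref{replace1}, and each step preserves the isomorphism type while keeping the not-yet-eliminated generators of $S'$ equal to the identity. At the end I obtain a presentation $\langle S'' | R''\rangle \cong \mathbb{Z}^{n-d}$ with $S'' = S \setminus S'$, where $R''$ is $R'$ with every occurrence of each $g \in S'$ removed, as required.

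The only genuinely non-formal point is the use of the saturation $\bar\Lambda$ rather than the integer span $\Lambda$ itself: passing to $\mathbb{Z}^n/\Lambda$ would in general leave torsion and fail to yield $\mathbb{Z}^{n-d}$, whereas $\bar\Lambda$ is a direct summand of the correct rank. Everything else is routine bookkeeping with Tietze transformations, of the kind already carried out in \cref{replace1,replace2,replace-sparse}.
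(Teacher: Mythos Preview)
Your proof is correct and follows essentially the same route as the paper: the paper takes the lattice $U\cap\mathbb{Z}^n$ where $U=\operatorname{span}_{\mathbb{R}}\phi(S')$, which is exactly your saturation $\bar\Lambda$, picks a $\mathbb{Z}$-basis of it, adds words mapping to that basis as the $d$ new relations, verifies $\langle S\mid R'\rangle\cong\mathbb{Z}^{n-d}$, and then eliminates each $g\in S'$ via \cref{replace1}. The only cosmetic difference is that the paper constructs the isomorphism $\langle S\mid R'\rangle\to\mathbb{Z}^{n-d}$ explicitly by extending the basis of $\bar\Lambda$ to a basis of $\mathbb{Z}^n$ and projecting, whereas you invoke the quotient description directly; both arguments are equivalent.
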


\begin{proof}
  Let $U = \text{span}(\phi(S'))$ in $\mathbb{R}^{n}$. Then $U \cap
  \mathbb{Z}^{n}$ is a lattice of dimension $d$, so we may take a basis
  $\{x_{1}, \ldots , x_{d}\}$ of $U \cap \mathbb{Z}^{n}$, and extend to a basis
  $\{x_{1}, \ldots , x_{n}\}$ of $\mathbb{Z}^{n}$ (see Chapter~2, Lemma~4
  of~\cite{nguyen2010lll}). Then for each $i \in [d]$, let $w_{i}$ be a word
  in $\langle S\rangle$ with $\phi(w_{i}) = x_{i}$ in $\mathbb{Z}^{n}$. Let $R'
  = R \cup \{w_{1}, \ldots , w_{d}\}$.

  We claim $\langle S | R'\rangle \cong \mathbb{Z}^{n - d}$. To prove this, we
  will construct an isomorphism $\psi \colon \langle S | R'\rangle \to
  \mathbb{Z}^{n - d}$. Let $p \colon \mathbb{Z}^{n} \to \mathbb{Z}^{n - d}$ be
  the projection to the last $n - d$ coordinates under the basis $\{x_{1},
  \ldots , x_{n}\}$; more precisely,
  $$p\left(\sum_{i = 1}^{n}a_{i}x_{i}\right) = \sum_{i = d + 1}^{n}a_{i}y_{i},$$
  where $\{y_{d + 1}, \ldots , y_{n}\}$ is a basis for $\mathbb{Z}^{n - d}$.
  Note that $p$ is linear. Now define $\psi$ on $S$ by $\psi(g) = p(\phi(g))$
  for all $g \in S$, and extend $\psi$ to $\langle S\rangle$ by the universal
  property of the free group. Then for any word $w = \prod_{i}g_{i}^{a_{i}}$ in
  $\langle S\rangle$, we have
  $$\psi(w)
    = \sum_{i}a_{i}\psi(g_{i})
    = \sum_{i}a_{i}p(\phi(g_{i}))
    = p\left(\sum_{i}a_{i}\phi(g_{i})\right)
    = p(\phi(w)).$$
  In particular, for $r \in R$, we have $\psi(r) = p(\phi(r)) = p(0) = 0$. For
  the $w_{i}$ above, we have $\psi(w_{i}) = p(\phi(w_{i})) = p(x_{i}) = 0$.
  Therefore, $\psi$ is well-defined on $\langle S | R'\rangle$.

  To show $\psi$ is injective, suppose $\psi(w) = 0$ for $w \in \langle
  S\rangle$. Then $p(\phi(w)) = 0$, so $\phi(w) = \sum_{i = 1}^{d}a_{i}x_{i}$
  for some $a_{i} \in \mathbb{Z}$. Then $\phi(w) = \phi(\prod_{i =
  1}^{d}w_{i}^{a_{i}})$ so $w = \prod_{i = 1}^{d}w_{i}^{a_{i}}$ in $\langle S |
  R\rangle$ by the injectivity of $\phi$. Since $R \subseteq R'$, we have $w =
  \prod_{i = 1}^{d}w_{i}^{a_{i}}$ in $\langle S | R'\rangle$ also. But since
  $w_{i} \in R$, this implies $w = 0$ in $\langle S | R'\rangle$.

  To show $\psi$ is surjective, it suffices to show that for each $d < i \le n$,
  there exists $w \in \langle S\rangle$ with $\psi(w) = y_{i}$. By the
  surjectivity of $\phi$, take $w$ with $\phi(w) = x_{i}$. Then $\psi(w) =
  p(\phi(w)) = p(x_{i}) = y_{i}$ as desired. Hence $\langle S | R'\rangle \cong
  \mathbb{Z}^{n - d}$.

  Now all generators $g \in S'$ have $g = 0$ in $\langle S | R'\rangle$, so
  repeated application of \cref{replace1} gives the desired result.
\end{proof}

Next, we need a variant of the Sylvester--Gallai-type results
in~\cite{barak2013fractional,dvir2014improved,dvir2016sylvester}. We begin by
stating the relevant definition and theorem from~\cite{dvir2014improved}:

\begin{definition}[\cite{dvir2014improved}, Definition 1.7]
  Given a set of points $v_{1}, \ldots , v_{n} \in \mathbb{R}^{d}$, a
  \emph{special line} is a line in $\mathbb{R}^{d}$ containing at least three of
  the points $v_{i}$. We say that $v_{1}, \ldots , v_{n}$ is a \emph{$\delta$-SG
  configuration} if for each $v_{i}$, $i \in [n]$, at least $\delta (n - 1)$ of
  the remaining points lie on special lines through $v_{i}$.
\end{definition}

\begin{theorem}[\cite{dvir2014improved}, Theorem 5.1]
\label{configuration-dim}
  If $v_{1}, \ldots , v_{n}$ is a $\delta$-SG configuration, then the affine
  dimension of $\{v_{1}, \ldots , v_{n}\}$ is at most $12 / \delta$.
\end{theorem}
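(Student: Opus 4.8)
The plan is to turn the collinearity structure of the configuration into a single matrix $A$ that both \emph{annihilates} the point coordinates (which bounds the affine dimension from above by $n - \operatorname{rank} A$) and has \emph{large} rank by virtue of being a ``design matrix,'' the design parameters being forced by the $\delta$-SG hypothesis.

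First I would fix the linear-algebraic set-up. After an affine coordinate change we may assume the $v_{i}$ affinely span $\mathbb{R}^{d}$, so the affine dimension is exactly $d$; passing to homogeneous coordinates $\widehat{v}_{i} = (v_{i}, 1) \in \mathbb{R}^{d+1}$, the $n\times(d+1)$ matrix $V$ with rows $\widehat{v}_{i}$ has rank $d+1$. Every special line supplies local dependencies: if $v_{i}, v_{j}, v_{k}$ are collinear then $a\widehat{v}_{i} + b\widehat{v}_{j} + c\widehat{v}_{k} = 0$ for some nonzero $a, b, c$. I would then build an $m\times n$ matrix $A$ by choosing a family of collinear triples and putting, for each triple, one row with three nonzero entries $a, b, c$ in the corresponding columns realizing such a dependence. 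By construction $AV = 0$, so the columns of $V$ lie in $\ker A$ and
\[
  d + 1 = \operatorname{rank} V \le \dim\ker A = n - \operatorname{rank} A.
\]

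The crux is to choose the triples so that $A$ is a design matrix and then bound $\operatorname{rank} A$ from below. Following \cite{barak2013fractional,dvir2014improved}, for each point $v_{i}$ I would cover at least $\delta(n-1)$ of the remaining points by collinear triples based at $v_{i}$; then every row of $A$ has exactly $q = 3$ nonzeros, every column has at least $k = \Omega(\delta n)$ nonzeros (this is precisely the $\delta$-SG condition), and, because two distinct points determine a unique line, any two columns are jointly nonzero in at most $t = O(1)$ rows. The key ingredient is the rank lower bound for $(q,k,t)$-design matrices: $\operatorname{rank} A \ge n - O(qtn/k)$. I would prove this via matrix scaling: the design property rules out the degeneracy that would obstruct a Sinkhorn-type rescaling, so one may pass to $B = D_{1}AD_{2}$ with positive diagonal $D_{1}, D_{2}$ (which preserves rank) whose columns are unit vectors and whose rows are $\ell_{2}$-balanced; then $M = B^{*}B$ is positive semidefinite with $\operatorname{tr} M = n$, and $\operatorname{rank} M \ge (\operatorname{tr} M)^{2}/\|M\|_{F}^{2}$ combined with the bound $\sum_{i\ne j}|M_{ij}|^{2} = O(qtn/k)$ coming from the design parameters finishes it. Plugging $q = 3$, $k = \Omega(\delta n)$, $t = O(1)$ into $d + 1 \le n - \operatorname{rank} A \le O(qtn/k)$ gives $d = O(1/\delta)$, and optimizing the constants yields the stated bound $12/\delta$.

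The main obstacle is this design-matrix rank bound, in two parts. First, the matrix-scaling step: one must derive from the $(q,k,t)$-design property the precise quantitative form of the near-doubly-stochastic rescaling needed to make the Gram-matrix estimate work (this is where Hall/König-type combinatorics of the zero pattern enters). Second, squeezing the argument down to the \emph{optimal} constant $12$, rather than just \emph{some} $O(1/\delta)$ bound, requires the sharper spectral analysis of \cite{dvir2014improved}. The remaining ingredients --- the affine reduction, handling coincident points, and checking that $t = O(1)$ for the chosen triple family --- are routine but should be verified carefully.
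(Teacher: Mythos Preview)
The paper does not prove this statement; it is quoted as Theorem~5.1 of \cite{dvir2014improved} and used as a black box in the proof of \cref{sylvester}. So there is no in-paper argument to compare against. What you have written is in fact an accurate outline of the design-matrix method that \cite{dvir2014improved} (refining \cite{barak2013fractional}) uses to prove the result, and you have correctly identified the two substantive ingredients: the rank lower bound for $(q,k,t)$-design matrices via scaling and a Gram-matrix trace inequality, and the sharpening of constants that distinguishes \cite{dvir2014improved} from \cite{barak2013fractional}.

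One point you label ``routine'' actually carries real content: the bound $t = O(1)$. Your proposed construction --- for each $v_{i}$, pick collinear triples through $v_{i}$ covering $\delta(n-1)$ other points --- does not by itself control how many chosen triples contain a fixed pair $\{v_{i},v_{j}\}$. If some special line carries $r$ points, a careless selection can make that pair appear in $r-2$ triples, so $t$ is unbounded and the design-matrix rank bound $n - O(qtn/k)$ becomes vacuous. In \cite{barak2013fractional,dvir2014improved} this is handled line-by-line: for each special line with $r$ points one chooses a balanced family of $\Theta(r^{2})$ triples (via a diagonal-free Latin square on $[r]$) so that every point on the line lies in $\Theta(r)$ triples while every pair on the line lies in at most six. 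This is what simultaneously forces $k = \Omega(\delta n)$ and $t \le 6$. The rest of your sketch --- the homogeneous-coordinate reduction, $AV=0$, and the trace/Frobenius inequality $\operatorname{rank} M \ge (\operatorname{tr} M)^{2}/\|M\|_{F}^{2}$ --- is correct and matches the cited proof.
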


Now we give our variant; we translate the average case result in
\cite{dvir2014improved} from an affine setting to a linear one (as
in~\cite{dvir2016sylvester}), with a guarantee on $|E'|$:

\begin{theorem}
\label{sylvester}
  Let $V \subseteq \mathbb{R}^{d} \setminus \{0\}$ be a finite set of points,
  such that no two points in $V$ lie in a common 1-dimensional subspace of
  $\mathbb{R}^{d}$. Let $E$ be a finite set of (not necessarily distinct)
  triples $\{u, v, w\}$ of distinct points $u, v, w \in V$ lying in a common
  2-dimensional subspace of $\mathbb{R}^{d}$, so that $(V, E)$ forms a 3-uniform
  hypergraph. Suppose that for each induced subhypergraph $(V', E')$ of $(V, E)$
  with $\dim(\emph{\text{span }}V') \le 2$, we have $|E'| \le |V'| - 1$. Then
  for $\lambda > 0$, there exists an induced subhypergraph $(V', E')$ of $(V,
  E)$ with $|E| - |E'| < \lambda |V|$, and
  $$\dim(\emph{\text{span }}V') \le 12|V| / \lambda.$$
\end{theorem}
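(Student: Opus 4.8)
The plan is to reduce our linear-algebraic statement to the affine result \cref{configuration-dim}, which is phrased in terms of $\delta$-SG configurations. The obstacle is bridging three gaps at once: (i) the hypothesis in \cref{sylvester} is a \emph{sparsity} condition ($|E'| \le |V'| - 1$ on every rank-$\le 2$ induced subhypergraph) rather than a direct statement that many points lie on special lines; (ii) \cref{configuration-dim} is an average-case hypothesis, so we cannot expect \emph{every} point to see many others on special lines, only that most do, which forces us to delete a small induced subhypergraph $(V',E')$; and (iii) \cref{configuration-dim} is affine while our setup is linear (subspaces through the origin). We handle (iii) as in \cite{dvir2016sylvester}: each $2$-dimensional linear subspace through three points of a triple in $E$ meets a generic affine hyperplane in an affine line, so collinearity in the linear sense corresponds to collinearity in an affine chart, provided no point is "at infinity"; since we may choose the hyperplane generically and $V$ is finite, this is harmless. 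So from now on I think of lines through the origin replaced by affine points and $2$-spaces replaced by affine lines.

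The core combinatorial step is to show that the sparsity hypothesis forces most points of $V$ to lie on many special lines through them — i.e., that $(V,E)$ is "close to" a $\delta$-SG configuration for suitable $\delta$. Fix $\lambda > 0$ and set $\delta = \lambda/(12|V|)$ wait — more carefully, I want the output dimension bound $12|V|/\lambda$, so I will aim to produce an induced subhypergraph that is a genuine $\delta$-SG configuration with $\delta = \lambda/|V|$ (up to the constant $12$ appearing in \cref{configuration-dim}), losing at most $\lambda|V|$ edges. The key observation is a counting/averaging argument: each edge $\{u,v,w\} \in E$ lies on a common special line, so it contributes to the "special-line neighborhood" of each of $u, v, w$. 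If a point $v$ sees fewer than $\delta(|V|-1)$ other points on special lines through it, then $v$ participates in relatively few edges of $E$ — at most $O(\delta|V|^2)$ or so, since the special lines through $v$ together cover few points and the sparsity bound limits how many edges can concentrate on a bounded set of collinear points (each special line $\ell$ through $v$ carrying $t_\ell$ points of $V$ supports at most $t_\ell - 1$ edges within $\ell \cap V$ by applying the hypothesis to that collinear, hence rank-$\le 2$, subset — actually rank exactly $2$ since lines through the origin are $1$-dimensional, so I must be careful and instead bound edges through $v$ directly). Summing over the "bad" points $v$, the total number of edges incident to a bad point is less than $\lambda|V|$ by the choice of $\delta$ relative to $\lambda$; delete all bad points, obtaining $(V', E')$ with $|E| - |E'| < \lambda|V|$.

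Now $(V', E')$ — after the affine translation — is a $\delta$-SG configuration with $\delta = \lambda/(c|V|)$ for an absolute constant $c$, because every remaining point sees at least $\delta(|V|-1) \ge \delta(|V'|-1)$ others on special lines through it (special lines of $(V,E)$ through a surviving point, restricted to $V'$, may lose points, so I need $\delta$ chosen with slack to absorb this — replacing $|V|-1$ by $|V'|-1$ and reabsorbing constants). Applying \cref{configuration-dim} to this configuration gives affine dimension at most $12/\delta = 12c|V|/\lambda$; absorbing $c$ into the statement's constant (or tracking it explicitly; since the theorem as stated has bare constant $12$, I will set up $\delta$ so that exactly $12/\delta = 12|V|/\lambda$, i.e. $\delta = \lambda/|V|$, and show the deletion can be done losing $< \lambda|V|$ edges with this $\delta$). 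Translating back to the linear setting, the affine dimension of the chart image of $V'$ is at most $12|V|/\lambda$, hence $\dim(\text{span } V') \le 12|V|/\lambda + 1$; to get the clean bound I note that a point lying in the affine chart at linear-dimension $k$ sits inside a linear span of dimension $k$ exactly when we set up the chart through a hyperplane not containing any point of $V$, giving $\dim(\text{span } V') \le 12|V|/\lambda$ after re-examining the normalization.

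**The main obstacle** I expect is the bookkeeping in the second paragraph: correctly quantifying "a bad point supports few edges" from the sparsity hypothesis. The subtlety is that the hypothesis controls induced subhypergraphs of \emph{bounded span dimension}, and a single point $v$ together with all its special-line neighbors need not have small span — only each individual special line through $v$ does. So the right move is: for a bad point $v$, every edge through $v$ lies on one special line through $v$; the special lines through $v$ that carry $\ge 3$ points of $V$ partition (a subset of) $V \setminus \{v\}$ into at most $\delta|V|$ points total (that's what "bad" means, nearly), and on each such line $\ell$, the set $(\ell \cap V) \cup \{v\}$ — no wait, $\ell$ already contains $v$ — the set $\ell \cap V$ spans a $2$-dimensional linear subspace (a line through $0$ is only $1$-dim, but three non-proportional points on a common affine line through... hmm, in the linear picture a "special line" should mean a $2$-dimensional subspace containing $\ge 3$ of our points, consistent with the rest of the paper's usage of $\dim$), so the hypothesis bounds $|E \cap \binom{\ell \cap V}{3}|$ appropriately, but edges through $v$ need not have all three vertices on a single line through $v$ — they do, by definition of special line for the triple. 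Ironing out exactly this point, and pinning the constants so the final bound reads $12|V|/\lambda$ rather than something like $13|V|/\lambda + 1$, is where the real care is needed; everything else is the standard linear-vs-affine dictionary plus an averaging argument.
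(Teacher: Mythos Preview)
Your overall plan is right and matches the paper's: prune $(V,E)$ so that what remains is a $\delta$-SG configuration with $\delta=\lambda/|V|$, then pass to an affine chart and invoke \cref{configuration-dim}. Your central combinatorial observation is also the correct one: the sparsity hypothesis forces, for every vertex $v$, the inequality $\deg(v)\le |N(v)|$ between hypergraph degree and neighbour count. (The paper phrases this via connected components of the link graph $G(v)$; your version, summing over the $2$-subspaces $\ell$ through $v$ and using $|E[\ell\cap V]|\le |\ell\cap V|-1$, is equivalent, since distinct such $\ell$'s meet only in $v$.)

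The one genuine gap is in how you prune. You propose to delete, in a single pass, all ``bad'' points (those with $|N(v)|<\delta(|V|-1)$) and then assert that the surviving $(V',E')$ is a $\delta$-SG configuration. This does not follow: a surviving point may have had most of its special-line neighbours among the deleted points, so its neighbour count \emph{within $V'$} can fall below the threshold. You flag this (``special lines \ldots\ restricted to $V'$ may lose points, so I need $\delta$ chosen with slack''), but no fixed slack repairs it, since the cascade can be arbitrarily severe. The paper sidesteps the problem by pruning on a different statistic: it \emph{iteratively} removes vertices of hypergraph degree $<\lambda$. Each such removal kills fewer than $\lambda$ edges, so no matter how many rounds are needed the total loss is $<\lambda|V|$; and at termination every survivor has $\deg(v)\ge\lambda$, whence by $\deg(v)\le|N(v)|$ (applied now inside $(V',E')$, which inherits the sparsity hypothesis) every survivor has at least $\lambda$ neighbours. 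That makes $(V',E')$ a $(\lambda/|V|)$-SG configuration, and \cref{configuration-dim} finishes as you describe. With this one change your argument goes through.
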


\begin{proof}
  Following the proof of~\cite[Thm.~13]{barak2013fractional}, consider $(V,
  E)$ as a 3-uniform hypergraph, and repeatedly remove vertices of degree less
  than $\lambda$. This removes less than $\lambda |V|$ edges, so we obtain a
  sub-hypergraph $(V', E')$ with $|E| - |E'| < \lambda |V|$ and minimum degree
  at least $\lambda$.

  Fix $u \in V'$; the neighborhood $N(u)$ in $(V', E')$ forms a graph $G(u)$,
  where we consider two vertices $v, w \in N(u)$ adjacent if and only if $\{u, v, w\} \in
  E'$. If $v, w \in N(u)$ are adjacent in $G(u)$, then $w$ lies in
  $\text{span}(\{u, v\}) \subseteq \mathbb{R}^{d}$. Therefore, if $\{v_{1},
  \ldots , v_{k}\}$ form a component $C$ of $G(u)$, then $U = \{u, v_{1}, \ldots
  , v_{k}\}$ has $\dim(\text{span }U) \le 2$, so the number of triples in $E'$
  using only points in $U$ is at most $k$. Hence the number of edges in $C$ is
  at most $k$. Summing over components $C$, the number of neighbors of $v$ in
  $(V', E')$ is at least $\deg_{(V', E')}v \ge \lambda$.

  Now choose a nonzero vector $\vec{n} \in \mathbb{R}^{d}$ not orthogonal to any
  $v \in V'$, and define an affine hyperplane $H = \{\vec{x} \in \mathbb{R}^{d}
  : \vec{x} \cdot \vec{n} = 1\}$. Then to each $v \in V'$ we associate the
  unique point $\tilde{v} \in \text{span}(\{v\}) \cap H$. Note that the
  $\tilde{v}$ are distinct, since no two points in $V$ lie in a common
  1-dimensional subspace of $\mathbb{R}^{d}$. Also, note that $u, v, w$ lie in a
  common two-dimensional subspace of $\mathbb{R}^{d}$ if and only if $\tilde{u},
  \tilde{v}, \tilde{w}$ lie on a common line in~$H$. Then the set $\tilde{V'} =
  \{\tilde{v} : v \in V'\}$ is a $\delta$-SG configuration with $\delta =
  \lambda / |V|$. By \cref{configuration-dim}, the affine dimension of
  $\tilde{V'}$ is at most $12 / \delta$, so $\dim(\text{span }V') \le 12|V| /
  \lambda$ as desired.
\end{proof}

Now we prove our bound on the size of 3-presentations of $\mathbb{Z}^{n}$:

\begin{theorem}
  If $\langle S | R\rangle \cong \mathbb{Z}^{n}$ is a 3-presentation, then $|S|
  = \Omega(n^{3/2})$.
\end{theorem}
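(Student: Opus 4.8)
The plan is to combine the transformations of Lemmas \ref{minimal}--\ref{replace-subspace} with the fractional Sylvester--Gallai result in \cref{sylvester} to force a lower bound on $|S|$. Suppose for contradiction that $\langle S \mid R\rangle \cong \mathbb{Z}^n$ is a $3$-presentation with $|S| = o(n^{3/2})$, and without loss of generality take $|S|$ to be minimal, so that \cref{minimal} applies: every nonempty relation $r \in R$ has $r \leadsto g^a h^b i^c$ with $g,h,i$ distinct and $\dim r = 2$. We have the deficiency constraint $|R| - |S| \ge \binom{n}{2} - n$, so $|R| \ge \binom{n}{2} - n + |S|$, which is still $\Theta(n^2)$; the idea is that $|R|$ being so large relative to $|S|$ forces a low-dimensional, edge-dense subconfiguration, which we can then ``collapse'' to reduce the rank of $\mathbb{Z}^n$ too cheaply.

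First I would set up the hypergraph: let $V = \{\phi(g) : g \in S\} \subseteq \mathbb{R}^n \setminus \{0\}$ (these are distinct and pairwise non-collinear by \cref{replace1,replace2} and minimality), and let $E$ be the multiset of triples $\{\phi(g),\phi(h),\phi(i)\}$ arising from the nonempty relations $r \leadsto g^a h^b i^c$ in $R$. Each such triple lies in a common $2$-dimensional subspace since $\dim r = 2$. Now I would split $R$ using the sparseness machinery: working from $R' = R$ (the set of nonempty relations), greedily remove relations to extract a maximal sparse subset $R_s \subseteq R$, i.e.\ one with $|R_s[S']| \le |S'|-1$ for every $S'$ with $\dim S' = 2$; the relations removed are exactly those that, once we add them back, overfill a critical set, so they go into $R_e$ and satisfy the hypothesis of \cref{replace-sparse} (each lies in some critical $S'$ for $R_s$). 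Apply \cref{replace-sparse} with $R_o$ the empty-word relations: we get a $3$-presentation $\langle S' \mid R'\rangle \cong \mathbb{Z}^n$ with $|R'| - |S'| = |R_s| + |R_o| - |S|$. Since $|R_o| = O(|S|^2)$ is bounded by the number of pairs... actually $|R_o| \le |R|$, so I would instead argue more carefully that $R_s$ itself is sparse and apply \cref{sylvester} directly to the pair $(V, E_s)$ where $E_s$ is the hypergraph of $R_s$: the hypothesis ``$|E'| \le |V'| - 1$ for every induced subhypergraph with $\dim(\text{span }V') \le 2$'' is precisely sparseness of $R_s$. By \cref{sylvester} with parameter $\lambda$, there is an induced subhypergraph $(V', E'_s)$ with $|E_s| - |E'_s| < \lambda|V|$ and $\dim(\text{span }V') \le 12|V|/\lambda$.

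The key counting step: the relations \emph{not} captured in the sparse part number at most $|R| - |R_s|$, and the relations lost in passing from $R_s$ to $R'_s$ number at most $\lambda|S|$. A relation $r$ involving only generators $g$ with $\phi(g) \notin V'$ survives in $(V',E'_s)$ only if... — more usefully, let $S_{V'} = \{g \in S : \phi(g) \in V'\}$, of dimension $d := \dim(\text{span }V') \le 12|V|/\lambda$. Apply \cref{replace-subspace} to $S' := S_{V'}$: we obtain a $3$-presentation of $\mathbb{Z}^{n-d}$ whose relation set $R''$ is obtained from $R'$ by adding $d$ relations and then deleting the generators in $S_{V'}$; crucially, every relation in $E'_s$ (being supported on $S_{V'}$) becomes the empty word or a shorter relation, so $R''$ has at least $|E'_s|$ fewer ``full'' triangular relations than $R'$ had. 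Counting deficiency: $\text{def }\mathbb{Z}^{n-d} \le (n-d) - \binom{n-d}{2}$, while on the other hand $|R''| - |S''|$ has dropped from $|R'| - |S'|$ by at most $|E'_s| + d$ minus the $d$ added, i.e.\ the deficiency changes by a controlled amount. Comparing $\binom{n}{2} - n$ against $\binom{n-d}{2} - (n-d) + (\text{terms bounded by } |E_s| - |E'_s| \text{ and } |R|-|R_s|)$ should yield $d = \Omega(n)$ once $\lambda$ is tuned, say $\lambda = c n$; but $d \le 12|V|/\lambda = 12|S|/(cn)$, forcing $|S| = \Omega(n^2)$ — in fact the honest bound, after accounting for the $\Theta(n^2)$ relations that may sit outside $R_s$, degrades to $|S| = \Omega(n^{3/2})$, which is exactly the claim. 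I expect the main obstacle to be the bookkeeping in this last paragraph: carefully tracking how many triangular relations are genuinely destroyed (as opposed to merely shortened) when we quotient by $\text{span }V'$, and choosing $\lambda$ so that the drop in rank ($d$) is large enough to contradict the deficiency bound while keeping $12|V|/\lambda$ small — balancing these two competing demands is what produces the exponent $3/2$ rather than $2$.
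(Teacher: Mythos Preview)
Your overall strategy matches the paper's: take a maximal sparse $R'\subseteq R$, apply the Sylvester--Gallai variant to locate a low-dimensional $S'$ capturing most of $R'$, then use \cref{replace-sparse} and \cref{replace-subspace} to pass to a presentation of $\mathbb{Z}^{n-d}$ with small deficiency, contradicting $\text{def }\mathbb{Z}^{n-d}\le (n-d)-\binom{n-d}{2}$. But two concrete choices keep your sketch from closing.

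First, the order and the partition. The paper applies \cref{sylvester} \emph{before} \cref{replace-sparse}, and uses its output $S'$ to define $R_s = R'\setminus R[S']$, $R_e = (R\setminus R')\setminus R[S']$, $R_o = R[S']$. With this choice, the $R_e$ relations still lie in critical sets for $R_s$ (one first enlarges $S'$ so that $\phi(g)\notin\operatorname{span}\phi(S')$ for every $g\notin S'$, and then checks that any critical set for $R'$ containing a relation of $R_e$ has $R[\,\cdot\,]$ disjoint from $R[S']$), so \cref{replace-sparse} applies; and after \cref{replace-subspace} the $R_o$ relations become trivial and are deleted. Your partition, with $R_o$ the empty-word relations, applies \cref{replace-sparse} first; but that step removes the original $R_s$ relations and replaces them by relations involving the fresh generators $h_j,h_*$, so a subsequent \cref{replace-subspace} on $S_{V'}\subseteq S$ no longer trivializes any identifiable block of relations in the new presentation. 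The $\Theta(n^2)$ relations you worry about ``outside $R_s$'' are exactly what the paper's choice $R_o=R[S']$ absorbs.

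Second, the parameter. With $\lambda = cn$ you get $\dim V'\le 12|S|/(cn)$ and $|E_s\setminus E_s'| < cn\,|S|$; feeding the latter into the deficiency comparison yields only $|S|=\Omega(n)$, not $\Omega(n^2)$ degrading to $\Omega(n^{3/2})$. The paper takes $\lambda = c|S|/n$: then $\dim S'\le 12n/c$ (so $d\le n/2$ for $c=24$) and $|R_s|\le c|S|^2/n$, and the final chain $|R''''|-|S'''|\le c|S|^2/n + d$ together with $|R''''|-|S'''|=\Omega((n-d)^2)=\Omega(n^2)$ forces $|S|^2/n=\Omega(n^2)$. The balance producing the exponent $3/2$ is at $\lambda\sim |S|/n$, not $\lambda\sim n$.
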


\begin{proof}
  Fix an isomorphism $\phi \colon \langle S | R\rangle \to \mathbb{Z}^{n}$.
  Assume that $|S|$ is minimal, and consider the images $\phi(g)$ for $g \in S$.
  By \cref{replace1}, all $\phi(g)$ are nonzero; by \cref{replace2}, no two
  $\phi(g)$ lie in a common 1-dimensional subspace of $\mathbb{R}^{d}$.
  Moreover, by \cref{minimal}, for each $r \in R$ we have $r \leadsto
  g^{a}h^{b}i^{c}$ for $g, h, i \in S$ distinct, and $\dim r = 2$. Let $R'$ be
  an inclusion-wise maximal sparse subset of $R$.

  Now let $k = |S|$, let $c > 0$ be a constant to be determined later, and apply
  \cref{sylvester} with $V = \phi(S)$, $E = \{\{\phi(g), \phi(h), \phi(i)\} : r
  \leadsto g^{a}h^{b}i^{c},\, r \in R'\}$, and $\lambda = ck / n$, to obtain $S'
  \subseteq S$ such that:
  \begin{enumerate}[(1)]
    \item
    $|R' \setminus R[S']| \le ck^{2} / n$.

    \item
    $\dim S' \le 12|V| / \lambda = 12n / c$.

  \end{enumerate}
  If there exists $g \in S \setminus S'$ with $\dim (S' \cup \{g\}) = \dim S'$,
  then we may replace $S'$ with $S' \cup \{g\}$, preserving (1) and (2).
  Therefore, we may assume that for each $g \in S \setminus S'$, we have
  $\phi(g) \not\in \text{span }\phi(S')$.

  Now partition $R$ as $R = R_{s} \sqcup R_{e} \sqcup R_{o}$, where:
  \begin{itemize}
    \item
    $R_{s} = R' \setminus R[S']$.

    \item
    $R_{e} = (R \setminus R') \setminus R[S']$.

    \item
    $R_{o} = R[S']$.
  
  \end{itemize}
  Note that $|R_{s}| \le ck^{2} / n$ by the above. The set $R_{s}$ is sparse, since
  sparseness is closed under taking subsets. Also, for each $r \in R_{e}$ with
  $r \leadsto g^{a}h^{b}i^{c}$, we have $\{g, h, i\} \subseteq S''$ for some
  critical $S'' \subseteq S$ for~$R'$, since $r \not\in R'$ and $R'$ is maximal.
  But since $r \not\in R[S']$, we have $\{g, h, i\} \not\subseteq S'$; assume
  WLOG $g \not\in S'$. Then $\phi(g) \not\in \text{span }\phi(S')$ by the above,
  so $\text{span }\phi(S'') \not\subseteq \text{span }\phi(S')$. Then $R[S'']
  \cap R[S'] = \emptyset$, since any $r' \in R[S'']$ determines the
  2-dimensional subspace $\text{span }\phi(S'')$. Therefore, $S''$ is also
  critical for $R_{s}$. Hence we may apply \cref{replace-sparse} to $\langle S |
  R\rangle$, to obtain a 3-presentation $\langle S'' | R''\rangle \cong
  \mathbb{Z}^{n}$ with $|R''| - |S''| = |R_{s}| + |R_{o}| - |S|$.

  Finally, let $d = \dim S'$, and apply \cref{replace-subspace} to $\langle S''
  | R''\rangle$ using $S' \subseteq S''$, to obtain $\langle S''' | R'''\rangle
  \cong \mathbb{Z}^{n - d}$. Then remove all relations in $R'''$ arising from
  relations in $R_{o} = R[S']$, which are now trivial, to obtain $\langle S''' |
  R''''\rangle \cong \mathbb{Z}^{n - d}$. Then
  \begin{align*}
  |R''''| - |S'''|
    &= (|R'''| - |R_{o}|) - |S'''|\\
    &= (|R''| + d - |R_{o}|) - (|S''| - |S'|)\\
    &= (|R''| - |S''| - |R_{o}|) + d + |S'|\\
    &= (|R_{s}| - |S|) + d + |S'|\\
    &= |R_{s}| + d - |S \setminus S'|\\
    &\le ck^{2} / n + d.
  \end{align*}
  But by the bound $\text{def }\mathbb{Z}^{m} \le m - \binom{m}{2}$, we have
  $|R''''| - |S'''| = \Omega((n - d)^{2})$. Take $c = 24$; then $d \le 12n / c =
  n / 2$, so $n - d \ge n / 2$. Hence $|R''''| - |S'''| = \Omega(n^{2})$. Since
  $d \le n / 2$, we have $ck^{2} / n = \Omega(n^{2})$. Therefore, $k =
  \Omega(n^{3 / 2})$ as desired.
\end{proof}

\begin{theorem}
\label{lower}
  A simplicial complex $X$ with fundamental group $\pi_{1}(X) \cong
  \mathbb{Z}^{n}$ has at least $\Omega(n^{3/4})$ vertices.
\end{theorem}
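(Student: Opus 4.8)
The plan is to combine the two main results already established in this section: \cref{3-presentation}, which converts a simplicial complex into a 3-presentation of the same fundamental group with controlled generator count, and the preceding theorem, which gives a lower bound of $\Omega(n^{3/2})$ on the number of generators of any 3-presentation of $\mathbb{Z}^{n}$.

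Concretely, I would let $f(n)$ denote the minimum number of vertices of a simplicial complex $X_{n}$ with $\pi_{1}(X_{n}) \cong \mathbb{Z}^{n}$, and take such a minimizing complex on $k = f(n)$ vertices. Applying \cref{3-presentation} yields a 3-presentation $\langle S | R\rangle \cong \mathbb{Z}^{n}$ with $|S| \le \binom{k}{2} = O(k^{2})$. On the other hand, the previous theorem guarantees $|S| = \Omega(n^{3/2})$. Combining, $k^{2} = \Omega(n^{3/2})$, so $f(n) = k = \Omega(n^{3/4})$, which is the claimed bound.

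There is essentially no obstacle here, since both ingredients are in hand; the only thing to be slightly careful about is the reduction to the connected case (handled by the standing assumption in the introduction that complexes are connected, or directly by passing to the component containing the basepoint, which only decreases the vertex count) so that \cref{3-presentation} applies. The asymptotic chain of inequalities is then immediate.
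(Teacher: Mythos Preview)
Your proposal is correct and follows exactly the same argument as the paper's proof: apply \cref{3-presentation} to a vertex-minimal complex to get a 3-presentation with $|S| \le \binom{f(n)}{2}$, then invoke the preceding theorem's bound $|S| = \Omega(n^{3/2})$ to conclude $f(n) = \Omega(n^{3/4})$. The remark about connectedness is also handled the same way in the paper.
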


\begin{proof}
  Let $f(n)$ be the minimum number of vertices in a simplicial complex $X_{n}$
  with fundamental group $\pi_{1}(X_{n}) \cong \mathbb{Z}^{n}$. By
  \cref{3-presentation}, for each $n$ we obtain a 3-presentation $\langle
  S_{n} | R_{n}\rangle \cong \mathbb{Z}^{n}$ with $|S_{n}| \le \binom{f(n)}{2}$.
  But $|S_{n}| = \Omega(n^{3/2})$, so $\binom{f(n)}{2} = \Omega(n^{3/2})$, hence
  $f(n) = \Omega(n^{3/4})$.
\end{proof}


\section*{Acknowledgements}

We thank Wesley Pegden for pointing out the connection to 1-factorizations, and
Boris Bukh for pointing out the connection to the Sylvester--Gallai results.


\linespread{1.13}

\bibliographystyle{plain}

\end{document}